\documentclass[a4paper,12pt]{amsart}
\usepackage{amssymb}
\usepackage[utf8]{inputenc}
\usepackage{amsmath}
\usepackage{stmaryrd}
\usepackage{amscd,amsthm,amssymb}
\usepackage{enumerate}
\usepackage{color}
\usepackage[all,cmtip]{xy}



\scrollmode
\usepackage{latexsym}

\addtolength{\oddsidemargin}{-2cm}
\addtolength{\evensidemargin}{-2cm}
\addtolength{\headheight}{5pt}
\addtolength{\headsep}{.5cm}
\addtolength{\textheight}{-1cm}
\addtolength{\textwidth}{4cm}
\addtolength{\footskip}{.5cm}
\parskip1ex

\def\.{\cdot}
\def\a{\alpha}
\def\b{\beta}
\def\c{\gamma}

\def\la{\langle}
\def\ra{\rangle}

\def\s{\sigma}

\def\beq{\begin{equation}}
\def\eeq{\end{equation}}
\def\bea{\begin{eqnarray*}}
\def\eea{\end{eqnarray*}}
\def\beaa{\begin{eqnarray}}
\def\eeaa{\end{eqnarray}}
\def\ba{\begin{array}}
\def\ea{\end{array}}

\def\o{\omega}

\def \CM{\mathbb{C}}

\def\M{M}


\def\Ric{\mathrm{Ric}}

\def\be{\begin{equation}}
\def\ee{\end{equation}}
\def\tr{\mathrm{tr}}

\def\Sym{\mathrm{Sym}}

\def\gg{\mathfrak{g}}

\def\k{\mathfrak{k}}

\def\mm{\mathfrak{m}}

\def\u{\mathfrak{u}}

\def\U{\mathrm{U}}

\def\E{\mathcal{E}}

\def\H{\mathcal{H}}

\def\SO{\mathrm{SO}}

\def\End{\mathrm{End}}

\def\Sp{\mathrm{Sp}}

\def\Sym{\mathrm{Sym}}

\def\Id{\mathrm{id}}

\def\T{\mathrm{\,T}}

\def\pr{\mathrm{pr}}

\def\s{\mathrm{scal}}



\newtheorem{epr}{Proposition}[section]
\newtheorem{ath}[epr]{Theorem}
\newtheorem{elem}[epr]{Lemma}
\newtheorem{ecor}[epr]{Corollary}

\theoremstyle{definition}
\newtheorem{ede}[epr]{Definition}
\newtheorem{ere}[epr]{Remark}



\title[Quaternion-K\"ahler manifolds with non-negative quaternionic curvature]{Quaternion-K\"ahler manifolds with non-negative quaternionic sectional curvature}

\author{Andrei Moroianu, Uwe Semmelmann, Gregor Weingart}

\address{Andrei Moroianu \\ Université Paris-Saclay, CNRS,  Laboratoire de mathématiques d'Orsay, 91405 Orsay, France, and Institute of Mathematics ``Simion Stoilow'' of the Romanian Academy, 21 Calea Grivitei, 010702 Bucharest, Romania }
\email{andrei.moroianu@math.cnrs.fr}

\address{Uwe Semmelmann, Institut f\"ur Geometrie und Topologie, Fachbereich Mathematik, Universit{\"a}t Stuttgart, Pfaffenwaldring 57, 70569 Stuttgart, Germany
}
\email{uwe.semmelmann@mathematik.uni-stuttgart.de}

\address{Gregor Weingart\\
 Instituto de Matemáticas\\
 Universidad Nacional Autónoma de México\\
 Avenida Universidad s/n\\
 Colonia Lomas de Chamilpa\\
 62210 Cuernavaca, Morelos, Mexico}
\email{gw@matcuer.unam.mx}

\date{\today}

\begin{document}

\begin{abstract} Compact Hermitian symmetric spaces are Kähler manifolds with constant scalar curvature and non-negative sectional curvature. 
A famous result by A. Gray states that, conversely, a compact simply connected Kähler manifold with constant scalar curvature and non-negative sectional curvature is a Hermitian symmetric space. The aim of the present article is to transpose Gray's result to the quaternion-Kähler setting. In order to achieve this, we introduce the quaternionic sectional curvature of quaternion-Kähler manifolds, we show that every Wolf space has non-negative quaternionic sectional curvature, and we prove that, conversely, every quaternion-Kähler manifold with non-negative quaternionic sectional curvature is a Wolf space. The proof makes crucial use of the nearly Kähler twistor spaces of positive quaternion-Kähler manifolds.
\end{abstract}

\subjclass[2010]{53B05, 53C25}
\keywords{quaternion-Kähler manifolds, twistor spaces, nearly Kähler structures}
\maketitle

\section{Introduction}

The original inspiration for this article stems from a beautiful result of A. Gray \cite{G} stating that {\em compact simply connected Kähler manifolds with constant scalar curvature and non-negative sectional curvature are Hermitian symmetric}. Gray's proof was revisited in \cite{MSW}, where it is shown that it basically follows from a Weitzenböck formula applied to the curvature tensor $R$, together with the fact that the curvature operator $q(R)$ is semi-definite on symmetric tensors whenever the sectional curvature is non-negative, cf. \cite{HMS}. 

It is thus natural to ask whether this strategy can be applied to other classes of Riemannian manifolds, like quaternion-Kähler manifolds, which appear in Berger's classification of Riemannian manifolds with special holonomy. In every dimension $4n\ge 8$, the holonomy group of a quaternion-Kähler manifold is contained in $\Sp(n)\cdot\Sp(1)\subset\SO(4n)$. Geometrically, a quaternion-Kähler manifold $(M,g)$ can be characterized by the existence of a parallel rank 3 vector subbundle $\E$ of $\End(\T M)$ locally spanned by three almost Hermitian structures $I,J,K$ satisfying the quaternionic relations $I^2=J^2=K^2=-\Id$, and $IJ=K$. 

All quaternion-Kähler manifolds are Einstein. The scalar curvature vanishes if and only if the manifold is hyperkähler (i.e. $I,J,K$ can be chosen globally as Kähler structures), a situation which is in general not considered as proper quaternion-Kähler. In the remaining cases, a quaternion-Kähler manifold $(M,g,\E)$ is called positive or negative according to the sign of its scalar curvature. While in the negative case it is possible to construct complete non-locally symmetric examples of quaternion-Kähler manifolds, the only examples of complete positive quaternion-Kähler manifolds are the so-called Wolf spaces \cite{W}, which are symmetric spaces of compact type associated to each simple compact Lie group $G$ via the choice of a root of $G$ of maximal length.

The famous LeBrun-Salamon conjecture (which was proved only in small dimensions $n=2$ \cite{PS} and $n=3$ or $4$ \cite{BW}) states that all positive quaternion-Kähler manifolds are Wolf spaces. A possible weakening of this conjecture would be to add some curvature positivity assumption, and to use Gray's theorem \cite{G}.
Indeed, an important feature of positive quaternion-Kähler manifolds is that the sphere bundle of $\E$, called the {\em twistor space}, can be naturally endowed with a Kähler-Einstein structure \cite{S}. 

It is thus tempting to try to impose a condition on the sectional curvature of $M$ which is sufficient for the non-negativity of the sectional curvature of the twistor space and conclude by Gray's theorem. However this cannot work, since the twistor spaces of Wolf spaces endowed with the Kähler-Einstein metric are not symmetric, except for the quaternionic projective space $\mathbb{H} P^n$. Such a curvature condition on $M$ would therefore be too strong and would not hold for the other Wolf spaces. 

Our strategy in this article is inspired by this naive idea. However, the main new ingredient is that, instead of using the Kähler-Einstein structure of the twistor space, we rather consider its {\em nearly Kähler structure}, obtained from the former by changing the sign of the complex structure on the vertical distribution, and rescaling the metric by a factor 1/2 in vertical directions. Recall that every nearly Kähler manifold $(N,g,J)$ carries a metric connection $\bar\nabla$ with parallel skew-symmetric torsion, called the canonical connection. This connection  often is more appropriate for understanding the intrinsic structure of the manifold than the Levi-Civita connection $\nabla^g$, and indeed, it will play a key role in what follows.

Note that a somewhat similar approach was tempted by Chow and Yang in \cite{CY}. They introduced a so called quaternionic bisectional curvature and showed that if it is non-negative for a positive quaternion-K\"ahler manifold, the manifold has to be a Wolf space. In fact, without noticing, they proved that the manifold has to be $\mathbb H P^n$. By calling the non-negativity of the  quaternionic bisectional curvature "a somewhat weaker assumption" than non-negativity of the sectional curvature, they suggest that the first is implied by the later. Actually, this is not the case, as explained in \cite{MSW}.

Let us now give a quick overview of the organization of the paper. 
In Section 2 we focus on nearly Kähler manifolds and generalize Gray's result to this setting. In Proposition \ref{nkcurv} we show that the curvature tensor $\bar R$ of the canonical connection is the sum of a curvature tensor of Kähler type and a parallel tensor, which, according to Proposition \ref{wbf-curv}, satisfies a Weitzenböck formula similar to the one in the Kähler setting. Assuming the non-negativity of the sectional curvature of $\bar R$, it is then possible to show that $\bar\nabla\bar R=0$, so the manifold is an Ambrose-Singer homogeneous space (cf. Theorem \ref{nk-hom}).

In Section 3 we study in detail the nearly Kähler structure of the twistor spaces of positive quaternion-Kähler manifolds. Most calculations performed in \S3.1 can be found at different places in the literature, but the computations in the references that we could find are not always correct, so we give a self-contained proof of the necessary formulae in Proposition \ref{nk}. We then relate in \S3.2 the curvature tensor of the canonical connection $\bar \nabla$ on the nearly Kähler twistor space of a positive quaternion-Kähler manifold, to the Riemannian curvature of the quaternion-Kähler base (cf. Proposition \ref{sec-twist}).

In Section 4 we introduce the notion of quaternionic sectional curvature for quaternion-Kähler manifolds, and obtain our main result (Theorem \ref{theorem2}) which can be stated as follows: {\em A compact quaternion-Kähler manifold with non-negative quaternionic sectional curvature is a Wolf space}. The main idea is to reinterpret the non-negativity condition for the quaternionic sectional curvature of the base in terms of  the non-negativity of the sectional curvature of the canonical connection on the nearly Kähler twistor space, and then to use Theorem \ref{nk-hom}.

Finally, in \S4.2 we show that, conversely,  every Wolf space has non-negative quaternionic sectional curvature (cf. Corollary \ref{cor-sec}), and in Proposition \ref{estimates} we obtain sharp estimates for the sectional curvature of 2-planes $P$ on every Wolf space in terms of the scalar curvature and the so-called Wirtinger angle $\theta_P$.

 {\sc Acknowledgments.} A.M. was partly supported by the PNRR-III-C9-2023-I8 grant CF 149/31.07.2023 {\em Conformal Aspects of Geometry and Dynamics}. U.S. was partly supported by the Procope Project No. 57650868 (Germany) /  48959TL (France).

%
\section{Nearly K\"ahler manifolds of non-negative curvature}
%

A {\it nearly K\"ahler manifold}  \ is by definition a Riemannian manifold $(N,g)$ together with an almost complex structure $J$, orthogonal
with respect to $g$ and satisfying the condition $(\nabla^g_X J)X = 0$ for all tangent vectors $X$. Here $\nabla^g$ is the Levi-Civita connection of $g$.
A nearly K\"ahler manifold is called {\it strict} if $\nabla^g_XJ\ne0$ for every non-zero tangent vector $X$.
On any nearly K\"ahler manifold there exists a unique metric connection $\bar\nabla$ with skew-symmetric and parallel torsion, preserving the
Hermitian structure, i.e.  with $\bar\nabla J=0$. It can be written
as $\bar\nabla = \nabla^g + \tau$ with $\tau := - \frac12 \, J \circ \nabla^g J$.  Then $\tau(X,Y,Z) = g(\tau_XY, Z)$ is a 
$\bar\nabla$-parallel $3$-form. Note that
$\tau_X \in \Lambda^{(2,0)+(0,2)}\T N$, i.e. $\tau_X(JY, JZ) = - \tau_X(Y, Z)$ holds for all vectors $X, Y,Z$. 
We will call $\bar \nabla$ the {\it canonical connection} of the nearly K\"ahler manifold $(N, g, J)$.


For the convenience of the reader the  following proposition recalls and collects properties of the curvature of the canonical connection on a nearly K\"ahler manifold. 
These facts will be  important in the rest of our paper.

\begin{epr}\label{nkcurv}
Let $(N^{2n}, g, J)$ be a  strict nearly K\"ahler manifold. Then the curvature $\bar R$ of the canonical connection $\bar \nabla$  is
pair symmetric and satisfies the first and second Bianchi identities 
$$
\mathop{\mathfrak{S}}_{XYZ}  \big( \bar R(X,Y,Z,W) - 4g (\tau_XY, \tau_ZW) \big) \;=\; 0\qquad \mbox{and} \qquad \mathop{\mathfrak{S}}_{XYZ} ( \bar\nabla_X \bar R)_{Y,Z} \;=\; 0 \ , 
$$
for all tangent vectors $X,Y,Z,W \in \T\ (=\T N)$.
The Ricci tensor  $\overline{\Ric}$ of $\bar R$ is  symmetric and $\bar \nabla$-parallel. Moreover, the curvature   $\bar R$ takes values in the Lie algebra 
$\u(n) \cong \Lambda^{1,1} \T$, i.e.  $\bar R \in \Sym^2  \Lambda^{1,1} \T$, and  $\bar R$ can be
written as
$$
\bar R \, = \, R^{K} +  R^{0} \ ,
$$
where $R^{K}$ is a K\"ahler curvature tensor and $R^{0}$ is a $\bar \nabla$-parallel tensor. More precisely, it holds that 
$R^{0}_{X,Y} =- (\sigma_{X,Y} + \sigma_{JX, JY})$,
where $\sigma := \tau_{e_i} \wedge \tau_{e_i} = \frac12 d \tau$ (throughout this paper we use the Einstein sum convention).
\end{epr}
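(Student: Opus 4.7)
The plan is to establish the claims in essentially the order stated, using two main inputs: the identity $\bar\nabla J=0$, which forces $\bar R(X,Y)$ to take values in $\u(n)$; and the fact that the torsion $T(X,Y)=2\tau_X Y$ of $\bar\nabla$ is itself $\bar\nabla$-parallel, so that the Bianchi identities for a connection with torsion take a simple form.

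First, from $\bar\nabla J=0$ one obtains $[\bar R(X,Y),J]=0$, hence $\bar R(X,Y)\in\u(n)\cong\Lambda^{1,1}\T$. The general first Bianchi identity for a connection with torsion reads
$$\mathop{\mathfrak{S}}_{XYZ} \bar R(X,Y)Z \;=\; \mathop{\mathfrak{S}}_{XYZ}\big[T(T(X,Y), Z)+(\bar\nabla_X T)(Y,Z)\big].$$
The last term vanishes since $\bar\nabla T=0$, while $T(T(X,Y),Z)=4\tau_{\tau_X Y}Z$; pairing with $W$ and using the total antisymmetry of the $3$-form $\tau$ to rewrite $g(\tau_{\tau_X Y}Z,W)=g(\tau_X Y,\tau_Z W)$ yields the stated first Bianchi identity. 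Pair symmetry of $\bar R$ then follows by the standard algebraic manipulation (summing the four cyclic first Bianchi identities at $(X,Y,Z,W)$, $(Y,Z,W,X)$, $(Z,W,X,Y)$, $(W,X,Y,Z)$ with alternating signs), once one checks that the defect $\mathop{\mathfrak{S}}_{XYZ} g(\tau_X Y,\tau_Z W)$ is invariant under $(X,Y)\leftrightarrow(Z,W)$ --- a short computation using total antisymmetry of $\tau$. Pair symmetry combined with $\bar R(X,Y)\in\Lambda^{1,1}\T$ yields $\bar R\in\Sym^2\Lambda^{1,1}\T$, and symmetry of $\overline{\Ric}$ follows immediately.

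For the second Bianchi identity, I would start from the general relation $\mathop{\mathfrak{S}}_{XYZ}(\bar\nabla_X\bar R)(Y,Z) = -\mathop{\mathfrak{S}}_{XYZ}\bar R(T(X,Y),Z)$ valid for any linear connection. The right-hand side vanishes: by pair symmetry $g(\bar R(T(X,Y),Z)U,V)=g(\bar R(U,V)T(X,Y),Z)$, so fixing $B:=\bar R(U,V)\in\u(n)$ it suffices to show $\mathop{\mathfrak{S}}_{XYZ} g(B\tau_X Y,Z)=0$. This quantity equals $-(B\cdot\tau)(X,Y,Z)$ for the natural action of $\u(n)$ on $3$-forms, and it vanishes because $\bar\nabla\tau=0$ implies $\bar R(U,V)\cdot\tau=0$ by the Ricci identity applied to the parallel tensor $\tau$.

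For the decomposition, set $R^0_{X,Y}:=-(\sigma_{X,Y}+\sigma_{JX,JY})$ and $R^K:=\bar R-R^0$. Since $\tau$, $J$, and $g$ are $\bar\nabla$-parallel, so is $\sigma=\tau_{e_i}\wedge\tau_{e_i}$, and hence $\bar\nabla R^0=0$; the type decomposition of $\tau_X$ yields $R^0\in\Sym^2\Lambda^{1,1}\T$. The key algebraic step is the identity
$$\mathop{\mathfrak{S}}_{XYZ} R^0(X,Y,Z,W) \;=\; 4\mathop{\mathfrak{S}}_{XYZ}g(\tau_X Y,\tau_Z W),$$
which is where the main technical obstacle lies: the $J$-twisted summand $\sigma_{JX,JY}$ is essential, and the verification expands $\sigma$ via $\tau_{e_i}\wedge\tau_{e_i}$, uses $\sum_i\tau(e_i,A,B)\tau(e_i,C,D)=g(\tau_A B,\tau_C D)$, and requires careful tracking of sign conventions together with the type decomposition $\tau_X\in\Lambda^{(2,0)+(0,2)}\T$. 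Granting this, $R^K$ satisfies the standard (uncorrected) first Bianchi identity, lies in $\Sym^2\Lambda^{1,1}\T$, and is pair symmetric --- i.e.\ is a K\"ahler curvature tensor. Parallelism of $\overline{\Ric}$ finally follows from $\bar\nabla R^0=0$ combined with the fact that the Ricci of $R^K$ is $\bar\nabla$-parallel, a consequence of the contracted second Bianchi identity and $\bar\nabla J=0$.
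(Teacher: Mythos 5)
Your handling of the general identities is correct and in fact more self-contained than the paper's, which simply cites \cite{KN} and \cite{CMS} at this point: the first Bianchi identity with torsion correction, the deduction of pair symmetry from the total antisymmetry of the defect $4\mathop{\mathfrak{S}}_{XYZ}g(\tau_XY,\tau_ZW)$, the inclusion $\bar R\in\Sym^2\Lambda^{1,1}\T$, and the vanishing of $\mathop{\mathfrak{S}}_{XYZ}\bar R(T(X,Y),Z)$ via $\bar R(U,V)\cdot\tau=0$ all go through. There are, however, two genuine gaps.

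First, the parallelism of $\overline{\Ric}$. You derive it from ``the contracted second Bianchi identity and $\bar\nabla J=0$'', but this cannot work: on an honest K\"ahler manifold the Levi-Civita connection satisfies both hypotheses and yet its Ricci tensor is parallel only in the Ricci-parallel case; the contracted second Bianchi identity merely relates $\delta^{\bar\nabla}\bar R$ to derivatives of $\overline{\Ric}$, it does not kill $\bar\nabla\,\overline{\Ric}$. The paper gets this from a substantive external input, namely Nagy's theorem that $\Ric^g$ is $\bar\nabla$-parallel on any nearly K\"ahler manifold \cite[Cor.~2.1]{N}, combined with $\bar R=R^g-\tau^2$, where $\tau^2_{X,Y}=[\tau_X,\tau_Y]-2\tau_{\tau_XY}$ is manifestly $\bar\nabla$-parallel. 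That ingredient has to be imported or reproved; it does not follow from the general identities you invoke.

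Second, the decomposition. You correctly isolate the one identity carrying all the content, but then ``grant'' it --- and this is exactly where the proof lives and where the normalization is delicate. Writing $\sigma^-_{X,Y}=\tau(e_i,X,Y)\tau_{e_i}$ and $\sigma^+_{X,Y}=\tau_{e_i}X\wedge\tau_{e_i}Y$, so that $\sigma_{X,Y}=2\sigma^-_{X,Y}-2\sigma^+_{X,Y}$, the type property of $\tau$ gives $\sigma^-_{JX,JY}=-\sigma^-_{X,Y}$ and $\sigma^+_{JX,JY}=\sigma^+_{X,Y}$, hence $\sigma_{X,Y}+\sigma_{JX,JY}=-4\sigma^+_{X,Y}$; moreover a direct computation yields $\mathop{\mathfrak{S}}_{XYZ}\sigma^-_{X,Y}(Z,W)=\tfrac12\sigma(X,Y,Z,W)$ while $\mathop{\mathfrak{S}}_{XYZ}\sigma^+_{X,Y}(Z,W)=-\sigma(X,Y,Z,W)$. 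Consequently the cyclic sum of your $R^0=-(\sigma_{X,Y}+\sigma_{JX,JY})=4\sigma^+_{X,Y}$ equals $-4\sigma$, whereas $4\mathop{\mathfrak{S}}_{XYZ}g(\tau_XY,\tau_ZW)=2\sigma$: the key identity does not balance, and the $\bar\nabla$-parallel $(1,1)$-part that actually makes $R^K=\bar R-R^0$ satisfy the uncorrected Bianchi identity is $R^0=-2\sigma^+$. The paper reaches $\bar R=R^K-2\sigma^+$ by a different and more robust route, namely the Bianchi map $\mathrm b:\Sym^2\Lambda^2\T\to\Lambda^4\T$ together with the computations $\mathrm b(\bar R)=8\sigma$, $\mathrm b(\sigma^-)=2\sigma$, $\mathrm b(\sigma^+)=-4\sigma$. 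So the step you deferred is not a routine verification: carried out, it forces a normalization of $R^0$ different from the one your argument assumes.
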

\proof
The curvature of any metric connection $\bar \nabla = \nabla^g + \tau $ with parallel skew-symmetric torsion $\tau$ can be decomposed as 
\be\label{rb}\bar R = R^g - \tau^2\ ,
\ee
where $\tau^2_{X,Y}: = [\tau_X, \tau_Y] - 2 \tau_{\tau_XY}$ (see \cite[Eq. (3)]{CMS}). This formula implies the pair symmetry of $\bar R$ and
thus also the symmetry of the Ricci tensor $\overline{\Ric}$ of $\bar R$. The first and second Bianchi identities follow from \cite[Thm. 5.3, Ch. III]{KN}, (cf. also \cite[Cor. 2.3]{CMS}).
The fact that  the Riemannian Ricci tensor ${\Ric}^g$ is  $\bar \nabla$-parallel was first shown by P.-A. Nagy \cite[Cor. 2.1]{N}. By \eqref{rb}, it follows that $\overline{\Ric}$ is  $\bar \nabla$-parallel as well. Note that in the case of interest for us in this paper, namely when the nearly 
K\"ahler structure is the twistor space over a positive quaternion-Kähler manifold, we provide a direct proof for this fact in Proposition \ref{p-ric} below.

Since the almost complex structure is $\bar\nabla$-parallel, the curvature $\bar R$ takes values in $\u(n)$ and the holonomy of $\bar\nabla$ is a subgroup of $\U(n)$. The pair symmetry implies $\bar R \in \Sym^2 \Lambda^{1,1} \T$. 

It remains to prove the decomposition of $\bar R$. We have $\bar R \in \Sym^2 (\Lambda^2 \T) \cong \ker b \oplus \Lambda^4 \T$,
where $\mathrm{b} : \Sym^2 (\Lambda^2 \T) \rightarrow \Lambda^4 \T$ is the Bianchi map, defined by 
$\mathrm{b}(R):= e_i \wedge e_j \wedge R(e_i\wedge e_j)$. The kernel of $\mathrm{b}$ is by definition the 
space of algebraic Riemannian curvature tensors. Consider the embedding $\imath$ of $\Lambda^4 \T$
into $ \Sym^2 (\Lambda^2 \T)$ defined by $\imath(\sigma)(X\wedge Y) := \sigma(X,Y,\cdot, \cdot)$. Since $\mathrm{b}\circ\imath=12\,\mathrm{Id}_{\Lambda^4 T}$, it follows  that $ \frac{1}{12} \imath\circ \mathrm{b}$ is the projection
onto the subspace of $ \Sym^2  (\Lambda^2 \T)$ isomorphic to  $\Lambda^4 \T$. An easy computation shows that 
$\mathrm{b}(\bar R) = -\mathrm{b}(\tau^2) = 8 \sigma$. Hence, one can write $\bar R_{X,Y} =:  \tilde R_{X,Y}  + \frac{1}{12} (\imath \circ \mathrm{b} (\bar R))_{X, Y}  = \tilde R_{X,Y} + \frac23 \sigma_{X,Y}$,
with  
\be\label{sig}\sigma_{X,Y} = 2 \tau(e_i, X, Y) \tau_{e_i} - 2  \tau_{e_i}X \wedge  \tau_{e_i}Y =: 2\sigma^-_{X,Y}  - 2\sigma^+_{X,Y}\ ,\ee
where $\sigma^-$ denotes the
first and $\sigma^+$ the second summand of $\sigma$ in the above expression and $\tilde R$ is an algebraic Riemannian curvature tensor. 
Since $\tau_X \in \Lambda^{(2,0) + (0,2)} \T$, we see that $\sigma^+_{X,Y} \in  \Lambda^{1,1} \T$, i.e. $\sigma^+  \in \Sym^2 \Lambda^{1,1} \T$
and $\sigma^-_{X,Y} \in  \Lambda^{(2,0) + (0,2)} \T$ for all $X,Y\in\T$. In particular, we have $\sigma^+_{X,Y} = \frac12(\sigma_{X,Y} + \sigma_{JX,JY})$. 
Another simple calculation shows $\mathrm{b}(\sigma^-) = 2 \sigma$ and $\mathrm{b}(\sigma^+) =-4 \sigma$.
It follows that $\tilde \sigma := 2\sigma^- + \sigma^+$ is in the kernel of the Bianchi map, i.e. $\tilde\sigma$ is an algebraic Riemannian curvature tensor, whence  $R^{K}:= \tilde R + \frac23 \tilde\sigma$ is also an algebraic Riemannian curvature tensor.
Moreover, we can write $\sigma = 2\sigma^--2\sigma^+ = \tilde\sigma - 3\sigma^+$ and 
$$
\bar R_{X,Y} = \tilde R_{X,Y} + \frac23 \sigma_{X,Y} = (\tilde R_{X,Y}+ \frac23 \tilde\sigma_{X,Y}) - 2\sigma^+_{X,Y} = R^{K}_{X,Y} 
-(\sigma_{X,Y} + \sigma_{JX,JY}) \ .
$$
Since $\bar R$ and $\sigma^+$ are in $\Sym^2\Lambda^{1,1} $ the previous calculation shows that $R^{K}$ is an algebraic
Riemannian curvature tensor of K\"ahler type. From \eqref{sig} it follows that $\sigma$ is  $\bar\nabla$-parallel and thus 
the same is true for $\sigma^+$. Hence $R^{0}$ defined by $R^{0}_{X, Y} := -(\sigma_{X,Y} + \sigma_{JX,JY})$ is a  $\bar \nabla$-parallel tensor.
\qed



Let  $q(\bar R) $ be the curvature endomorphism of the vector bundle $\Lambda^2 \T  \otimes \Lambda^2 \T $, defined as
$$
q(\bar R) K \, :=\,   \frac12 (e_i \wedge e_j)_* \bar R_{e_i, e_j} K\ ,
$$
for every section $K$ of $\Lambda^2 \T  \otimes \Lambda^2 \T $, where $\{e_i\}$ is any local orthonormal basis of $\T$. If $(\o_\a)_{1\le\a\le n(n-1)/2}$ is an orthonormal basis of $\Lambda^2 \T$, then one can write $q(\bar R) K \, =\,  (\o_\a)_* \bar R(\o_\a)_* K$.

\begin{ere}
The curvature endomorphism $q(\bar R)$ is symmetric. This follows from the pair symmetry of $\bar R$ which more generally  holds for the curvature tensor
of all connections with parallel skew-symmetric torsion. In particular, $q(\bar R)$ is pointwise diagonalizable. 
\end{ere}

Consider the curvature tensor $\bar R$ as an element of  $\Omega^2(\Lambda^2 \T )$, i.e. as a 2-form with values in the vector bundle $\Lambda^2 \T $. 
\begin{elem}\label{l2.2}
Let $\pi_{\Sym^2}$ denote the projection from $\Lambda^2 \T  \otimes \Lambda^2 \T $ to $\Sym^2(\Lambda^2 \T)$. Then 
$$\pi_{\Sym^2}(e_j \wedge e_i \, \lrcorner \, \bar R_{e_i, e_j} \bar R)= \frac12 q(\bar R) \bar R\ ,$$
where the interior and exterior products on the left hand side are only applied to the first factor of $\Lambda^2 \T  \otimes \Lambda^2 \T $.
\end{elem}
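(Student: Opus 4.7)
The plan is to treat this as a purely algebraic identity and verify it by a direct component computation in a local orthonormal basis $\{e_i\}$ of $T$. First I would use the pair symmetry of $\bar R$ from Proposition~\ref{nkcurv} to write
\[
\bar R \;=\; \tfrac12\, e^i \wedge e^j \otimes \bar R_{e_i, e_j}\;\in\;\Sym^2(\Lambda^2 T) \subset \Lambda^2 T \otimes \Lambda^2 T,
\]
with Einstein summation and $\bar R_{e_i, e_j}$ identified via the metric both as a skew endomorphism of $T$ and as a 2-form in $\Lambda^2 T$.

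Next I would expand the left-hand side. The element $\bar R_{e_i, e_j}$ acts on $\bar R \in \Lambda^2 T \otimes \Lambda^2 T$ as an $\so(T)$-element, i.e. by derivation on both tensor factors, producing one term from the action on the first factor $e^k \wedge e^l$ (involving the adjoint bracket $[\bar R_{e_i, e_j}, e^k \wedge e^l]$) and one from the action on the second factor $\bar R_{e_k, e_l}$ (involving $[\bar R_{e_i, e_j}, \bar R_{e_k, e_l}]$). I would then apply $e_j \wedge (e_i \lrcorner \cdot)$ to the first factor only and sum over $i, j$, simplifying both contributions using the elementary identities
\[
e_i \lrcorner (e^k \wedge e^l) = \delta_{ik}\, e^l - \delta_{il}\, e^k\qquad\text{and}\qquad (e^i \wedge e^j)_* \alpha = [e^i \wedge e^j,\, \alpha]\quad(\alpha \in \Lambda^2 T),
\]
together with the antisymmetry of $\bar R_{e_i, e_j}$ in $(i,j)$ and the pair symmetry of $\bar R$. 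The aim is to collapse both contributions to the same double commutator sums that arise in the expansion of $q(\bar R)\bar R = \tfrac12\,(e^i \wedge e^j)_* (\bar R_{e_i, e_j})_* \bar R$, where both operators act by derivation on both factors of $\Lambda^2 T\otimes\Lambda^2 T$.

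Finally, the projection $\pi_{\Sym^2}$ kills any antisymmetric component present on the left-hand side before symmetrization; note that $q(\bar R)\bar R$ is automatically in $\Sym^2(\Lambda^2 T)$, since the derivation $\so(T)$-action preserves $\Sym^2$ and $\bar R$ already lies there. The main obstacle I expect is the careful bookkeeping of indices and the distinction between operators acting on the first versus the second factor of $\Lambda^2 T \otimes \Lambda^2 T$; no Bianchi identity or further geometric input is needed beyond the pair symmetry of $\bar R$ ensured by Proposition~\ref{nkcurv}.
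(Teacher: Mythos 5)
Your proposal is correct and takes essentially the same route as the paper's own proof: a direct basis expansion using only the pair symmetry of $\bar R$, the distinction between the derivation action of an element of $\so(\T)\cong\Lambda^2\T$ on both tensor factors versus the wedge/contraction acting on the first factor only, and a final symmetrization that matches the surviving terms with those of $q(\bar R)\bar R$. The paper merely phrases the computation in an orthonormal basis $(\omega_\a)$ of $\Lambda^2\T$ rather than via the $e_i\wedge e_j$ expansion, which is a cosmetic difference.
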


\begin{proof}
Let us write $\bar R=\bar R_{\a\b}\o_\a\otimes\o_\b$, with $\bar R_{\a\b}=\bar R_{\b\a}$ for every $1\le\a,\b\le n(n-1)/2$. We then get
\bea
q(\bar R) \bar R&=&\bar R_{\a\b}(\o_\a)_*(\o_\b)_*\bar R=\bar R_{\a\b}\bar R_{\c\delta}(\o_\a)_*(\o_\b)_*(\o_\c\otimes\o_\delta)\\
&=&\bar R_{\a\b}\bar R_{\c\delta}\big((\o_\a)_*(\o_\b)_*\o_\c\otimes\o_\delta+(\o_\b)_*\o_\c\otimes(\o_\a)_*\o_\delta\\&&\qquad\qquad+(\o_\a)_*\o_\c\otimes(\o_\b)_*\o_\delta+\o_\c\otimes(\o_\a)_*(\o_\b)_*\o_\delta\big)\\
&=&\bar R_{\a\b}\bar R_{\c\delta}\big((\o_\a)_*(\o_\b)_*\o_\c\otimes\o_\delta+2(\o_\a)_*\o_\c\otimes(\o_\b)_*\o_\delta+\o_\c\otimes(\o_\a)_*(\o_\b)_*\o_\delta\big)\ .
\eea
On the other hand, recall that  $(e_i \wedge e_j)_* $ acts on  $\Lambda^2 \T$
as $   \, e_j \wedge e_i \, \lrcorner \,  -\,  e_i \wedge e_j \, \lrcorner $. Therefore, using that $\bar R(\o_\a)=\bar R_{\a\b}\o_\b$, we can write
\bea
e_j \wedge e_i \, \lrcorner \, \bar R_{e_i, e_j} \bar R&=&\frac12(e_j \wedge e_i -e_i\wedge e_j\lrcorner)\,  \bar R_{e_i, e_j} \bar R=(\o_\a)_*\bar R(\o_\a) \bar R\\
&=&(\o_\a)_*\big(\bar R_{\a\b} \bar R_{\c\delta}((\o_\b)_*\o_\c\otimes\o_\delta+\o_\c\otimes(\o_\b)_*\o_\delta)\big)\\
&=&\bar R_{\a\b} \bar R_{\c\delta}\big((\o_\a)_*(\o_\b)_*\o_\c\otimes\o_\delta+(\o_\a)_*\o_\c\otimes(\o_\b)_*\o_\delta\big)\ ,
\eea
where we recall that in the first two lines, $(\omega_\a)_*$ only acts on the first factor of $\Lambda^2 \T  \otimes \Lambda^2 \T $.

Thanks to the symmetry of $\bar R$, the last summand belongs to $\Sym^2(\Lambda^2 \T)$, whereas the projection of the first factor onto $\Sym^2(\Lambda^2 \T)$ reads
\bea\pi_{\Sym^2}(\bar R_{\a\b} \bar R_{\c\delta}(\o_\a)_*(\o_\b)_*\o_\c\otimes\o_\delta)&=&\frac12\bar R_{\a\b} \bar R_{\c\delta}((\o_\a)_*(\o_\b)_*\o_\c\otimes\o_\delta+\o_\delta\otimes (\o_\a)_*(\o_\b)_*\o_\c)\\
&=&\frac12\bar R_{\a\b} \bar R_{\c\delta}((\o_\a)_*(\o_\b)_*\o_\c\otimes\o_\delta+\o_\c\otimes (\o_\a)_*(\o_\b)_*\o_\delta)\ .
\eea
Comparing the above expressions yields the result.
\end{proof}

\begin{epr}\label{wbf-curv}
For the curvature tensor of the canonical connection of a strict nearly K\"ahler manifold  $(N,g,J)$ the following relation holds
$$
\pi_{\Sym^2}(\bar \nabla^* \bar \nabla \bar R) \, = \, - \frac12 q(\bar R) \bar R \ .
$$
\end{epr}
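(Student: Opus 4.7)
The plan is to adapt the classical Bochner/Weitzenböck argument used in the Kähler setting \cite{MSW} to the canonical connection $\bar\nabla$, controlling the contributions arising from its parallel skew-symmetric torsion $\tau$.

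I would fix a point $p \in N$ and a local frame $\{e_i\}$ extended to be $\bar\nabla$-parallel at $p$. Starting from the second Bianchi identity of Proposition \ref{nkcurv}, $(\bar\nabla_X \bar R)(Y,Z) + (\bar\nabla_Y \bar R)(Z,X) + (\bar\nabla_Z \bar R)(X,Y) = 0$, I would differentiate in the direction $e_k$, set $Z = e_k$, and sum over $k$, obtaining at $p$:
\[
(\bar\nabla^*\bar\nabla \bar R)(X,Y) \;=\; \sum_k (\bar\nabla_{e_k}\bar\nabla_X \bar R)(Y, e_k) + \sum_k (\bar\nabla_{e_k}\bar\nabla_Y \bar R)(e_k, X).
\]
The Ricci identity for $\bar\nabla$, together with the fact that $[e_k, X]|_p = -T(e_k,X) = -2\tau_{e_k} X$, then yields
\[
\bar\nabla_{e_k}\bar\nabla_X\bar R = \bar\nabla_X\bar\nabla_{e_k}\bar R + \bar R_{e_k,X}\star\bar R - 2\,\bar\nabla_{\tau_{e_k}X}\bar R\ ,
\]
where $\star$ denotes the derivation action of $\Lambda^2 T \cong \so(T)$ on tensors.

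Three kinds of contributions then arise. First, the term $\sum_k (\bar\nabla_X\bar\nabla_{e_k}\bar R)(Y,e_k)$ reduces at $p$ to $\bar\nabla_X\bigl(\sum_k (\bar\nabla_{e_k}\bar R)(Y,e_k)\bigr)$; a further contraction of the second Bianchi identity, combined with the $\bar\nabla$-parallelism of $\overline{\Ric}$ from Proposition \ref{nkcurv}, gives $\sum_k (\bar\nabla_{e_k}\bar R)(Y,e_k) \equiv 0$, so this contribution vanishes. Second, the torsion term $-2\sum_k(\bar\nabla_{\tau_{e_k}X}\bar R)(Y,e_k)$ can be rewritten (via the second Bianchi identity and the total antisymmetry of $\tau$) as $-2(\bar\nabla_Y\bar R)(\tau_X)$; since by Proposition \ref{nkcurv} one has $\bar R \in \Sym^2\Lambda^{1,1} T$ while $\tau_X \in \Lambda^{(2,0)+(0,2)} T$, and these subspaces are orthogonal complements in $\Lambda^2 T$ preserved by $\bar\nabla$ (as $J$ is $\bar\nabla$-parallel), the operator $\bar\nabla_Y\bar R$ annihilates $\tau_X$ and this contribution vanishes as well. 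Third, what remains is the curvature contribution $\sum_k(\bar R_{e_k,X}\star\bar R)(Y,e_k) + \sum_k(\bar R_{e_k,Y}\star\bar R)(e_k,X)$. Unpacking the derivation action across the four slots of $\bar R$, using the pair symmetry of $\bar R$, and projecting to $\Sym^2$ rearranges this expression into $-\pi_{\Sym^2}(e_j \wedge e_i \lrcorner \bar R_{e_i, e_j}\bar R)$, and Lemma \ref{l2.2} then delivers the conclusion $-\tfrac{1}{2} q(\bar R)\bar R$.

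The main obstacle is the algebraic identification in the last step: one must unpack the derivation action of $\bar R_{e_k, X}$ on all four arguments of $\bar R$, keep careful track of signs and symmetries, and verify that after $\pi_{\Sym^2}$-projection and summation over $k$ the four resulting terms assemble into precisely the expression appearing on the left-hand side of Lemma \ref{l2.2}. The divergence vanishing (via parallel $\overline{\Ric}$) and the torsion cancellation (via the orthogonal type decomposition $\Lambda^2 T = \Lambda^{1,1}T \oplus \Lambda^{(2,0)+(0,2)} T$) are comparatively conceptual; the final algebraic bookkeeping carries the bulk of the technical weight.
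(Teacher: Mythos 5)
Your proposal is correct and follows essentially the same route as the paper: the paper phrases the computation as $\delta^{\bar\nabla}d^{\bar\nabla}\bar R=0$ rather than as a contracted second Bianchi identity, but the three ingredients are identical — the divergence term dies by $\bar\nabla\,\overline{\Ric}=0$, the torsion term dies by the type argument ($\bar R\in\Sym^2\Lambda^{1,1}$ versus $\tau_X\in\Lambda^{(2,0)+(0,2)}$, which is exactly the paper's $Je_a,Je_k$ substitution), and the remaining curvature term is identified with $-\tfrac12 q(\bar R)\bar R$ via Lemma \ref{l2.2}.
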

\proof
Considering again $\bar R$ as an element of  $\Omega^2(\Lambda^2 \T )$, it is well known that the second Bianchi identity for $\bar R$ is equivalent 
to $d^{\bar\nabla} \bar R = 0$. Since $\bar\nabla \, \overline{\Ric} = 0$ we also have $\delta^{\bar \nabla} \bar R = 0$. Indeed, writing  $\bar R$ as
$
\bar R = \frac12 \, e_i \wedge e_j \otimes \bar R_{e_i, e_j}
$
we find (choosing the local orthonormal frame to be $\bar\nabla$-parallel at the point where the computation is done):
$$
\delta^{\bar \nabla} \bar R \, =\, - e_i \,  \lrcorner \bar\nabla_{e_i}\bar R\, =\, - \frac12 \, e_i \, \lrcorner (e_j \wedge e_k) \otimes (\bar\nabla_{e_i} \bar R)_{e_j, e_k}
\,= \,- (\bar\nabla_{e_i}  \bar R)_{e_i, e_k}\ .
$$
Using the second Bianchi identity for $\bar R$ we obtain
\bea
  g((\bar\nabla_{e_i} \bar R)_{e_i, e_k}Y, Z)  &=&   g ((\bar\nabla_{e_i} \bar R)_{Y, Z} e_i, X) \\
&=&
  g (  (\bar\nabla_{Y} \bar R)_{e_i, Z} e_i, X)  \,+\,   g ( (\bar\nabla_{Z} \bar R)_{Y, e_i} e_i, X)   \\
 &=&
  Y g(\bar R_{e_i, Z} e_i, X) \,+ \,Z g( \bar R_{Y, e_i} e_i, X) \ .
\eea
The last sum vanishes because of $\bar\nabla \, \overline{\Ric} = 0$. We assume again that the local orthonormal frame is $\bar\nabla$-parallel at the point where the computation is done, so in particular $[e_i, e_j] = -\tau_{e_i} e_j + \tau_{e_j}e_i = -2 \tau_{e_i} e_j $. Keeping in mind that $d^{\bar\nabla} \bar R = 0$ and $\delta^{\bar \nabla} \bar R = 0$, we then obtain:
\bea
0=\delta^{\bar \nabla} d^{\bar\nabla} \bar R 
&=& - e_i \, \lrcorner \, \bar\nabla_{e_i} (e_j \wedge\bar \nabla_{e_j} \bar R)  
= -  e_i \, \lrcorner  \, (e_j \wedge  \bar\nabla_{e_i}\bar \nabla_{e_j} \bar R) \\
&=&  -  \bar \nabla_{e_i} \bar \nabla_{e_i} \bar R  \,  + \,   e_j \wedge e_i \, \lrcorner \,   \bar \nabla_{e_i} \bar \nabla_{e_j}  \bar R \\
&=& \bar \nabla^* \bar \nabla \bar R \, +\,  e_j \wedge e_i \, \lrcorner \,
\big( \bar R_{e_i, e_j} \bar R   +  \bar\nabla_{e_j} \bar\nabla_{e_i}  \bar R + \bar \nabla_{[e_i, e_j] } \bar R  \big)\\
&=&  \bar \nabla^* \bar \nabla \bar R \, -\,  d^{\bar\nabla}  \delta^{\bar \nabla} \bar R \, +\,  e_j \wedge e_i \, \lrcorner \, \bar R_{e_i, e_j} \bar R
\,- 2  e_j \wedge e_i \, \lrcorner \, \bar\nabla_{\tau_{e_i} e_j} \bar R\ ,
\eea
whence 
\be\label{eqr}\bar \nabla^* \bar \nabla R  \, =\, - e_j \wedge e_i \, \lrcorner \, \bar R_{e_i, e_j} \bar R
\,+2  e_j \wedge e_i \, \lrcorner \, \bar\nabla_{\tau_{e_i} e_j} \bar R\ .
\ee
We claim that the last summand in \eqref{eqr} vanishes. Indeed, the second Bianchi identity yields
\bea
 \,2\,  e_j \wedge e_i \, \lrcorner \,  \bar\nabla_{\tau_{e_i} e_j} \bar R
&=&    2   e_j \wedge e_i \, \lrcorner \,  (\bar\nabla_{e_k} \bar R) \, \tau(e_i, e_j, e_k) \\
&=&     \, e_j \wedge e_i \, \lrcorner \,  (e_a \wedge e_b) \otimes  (\bar\nabla_{e_k} \bar R)_{e_a, e_b} \, \tau(e_i, e_j, e_k) \\
&=&    2\, e_j \wedge e_b \otimes  (\bar\nabla_{e_k} \bar R)_{e_a, e_b} \, \tau(e_a, e_j, e_k) \\
&=&     2\, e_j \wedge e_b \otimes  \big( (\bar\nabla_{e_a} \bar R)_{e_k, e_b}  + (\bar\nabla_{e_b} \bar R)_{e_a, e_k}   \big) \, \tau(e_a, e_j, e_k) \\
&=&  \, e_j \wedge e_b \otimes  (\bar\nabla_{e_b} \bar R)_{e_a, e_k} \, \tau(e_a, e_j, e_k) \ .
\eea
(The last equality follows by exchanging the indices $a$ and $k$ in the third line). We then replace $e_a$ and $e_k$ by $Je_a$ and $Je_k$ and obtain that the last line is equal to its opposite (as $\bar R_{e_a, e_k}=\bar R_{Je_a, Je_k}$ and $\tau(e_a, e_j, e_k)=-\tau(Je_a, e_j, Je_k)$) so it has to vanish. Consequently, \eqref{eqr} becomes 
\be\label{eqr1}\bar \nabla^* \bar \nabla \bar R  \, =\, - e_j \wedge e_i \, \lrcorner \, \bar R_{e_i, e_j} \bar R \ .
\ee
The proposition thus follows from Lemma \ref{l2.2}.
\qed

\begin{ere}\label{proj}
It is easy to check, using the symmetry by pairs of $\bar R$, that $\bar \nabla^* \bar \nabla \bar R$ is actually pair symmetric as well. Equation \eqref{eqr1} thus shows that the formula in Lemma \ref{l2.2} is actually valid without projecting the left hand term on $\Sym^2(\Lambda^2\T)$.
\end{ere}


\begin{elem}\label{non-neg}
Let $(N, g, J) $ be a strict nearly K\"ahler manifold. If the sectional curvature of the curvature $\bar R$ of the canonical connection $\bar \nabla$ is non-negative, then
$
q(\bar R) \ge 0
$
on all symmetric tensors, i.e. all eigenvalues of $q(\bar R)$  are non-negative.
\end{elem}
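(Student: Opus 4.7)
The plan is to reduce the statement to the algebraic fact, already used in the Kähler case by Hall-Murphy-Streets \cite{HMS} and in \cite{MSW}, that a pair-symmetric curvature tensor with non-negative sectional curvature yields a non-negative curvature endomorphism on symmetric tensors. Since the only algebraic input to that argument is the pair-symmetry $\bar R \in \Sym^2(\Lambda^2\T)$, and not the full first Bianchi identity, the proof should transfer verbatim to the nearly Kähler setting thanks to Proposition \ref{nkcurv}.

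Concretely, fix an arbitrary $K\in\Sym^2(\Lambda^2\T)$, which we view as a self-adjoint endomorphism of $\Lambda^2\T$. By the spectral theorem we can diagonalize $K$ in an orthonormal eigenbasis $\{\omega_\a\}$ of $\Lambda^2\T$ with eigenvalues $\mu_\a$, so that $K=\sum_\a \mu_\a\,\omega_\a\otimes\omega_\a$. The first step is to compute $\langle q(\bar R)K,K\rangle$ starting from the definition $q(\bar R)=(\omega_\a)_*\bar R(\omega_\a)_*$ and using that each $(\omega_\a)_*$ acts as a skew derivation on $\Lambda^2\T\otimes\Lambda^2\T$. Exploiting the ad-invariance identity $\langle [\omega_\a,\omega_\b],\omega_\gamma\rangle=\langle\omega_\a,[\omega_\b,\omega_\gamma]\rangle$ under the bracket on $\so(\T)\cong\Lambda^2\T$, the inner product collapses to a double sum
\[
\langle q(\bar R)K,K\rangle \;=\; \sum_{\a,\b}(\mu_\a-\mu_\b)^2\,\bar R_{\a\b}\,,
\]
for some symmetric quantities $\bar R_{\a\b}$ built from the values of $\bar R$ on bracket expressions $[\omega_\a,\omega_\b]$.

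The second step, which is the main technical point, is to identify each coefficient $\bar R_{\a\b}$ as a non-negative linear combination of sectional-curvature values $\bar R(X,Y,Y,X)$ of $\bar R$. This identification is immediate if the eigenvectors $\omega_\a,\omega_\b$ are simple bivectors, but in general they are not. One bypasses this obstacle by invoking the normal form for a pair of $2$-vectors (or by an $\OO(\T)$-invariant integral expression over simple bivectors), which expresses $\bar R_{\a\b}$ as an average of sectional curvatures with non-negative weights. The assumption that the sectional curvature of $\bar R$ is non-negative then forces $\bar R_{\a\b}\ge 0$, and combined with $(\mu_\a-\mu_\b)^2\ge 0$ this yields $\langle q(\bar R)K,K\rangle\ge 0$.

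The main obstacle I expect is precisely this normal-form/positivity step: writing the coefficients $\bar R_{\a\b}$ that appear in the spectral calculation as manifestly non-negative combinations of sectional curvatures of simple bivectors. However, since $\bar R$ has the pair-symmetry property listed in Proposition \ref{nkcurv}, and pair-symmetry is the only structural input needed in the corresponding Riemannian and Kähler arguments of \cite{HMS} and \cite{MSW}, I expect the same algebraic manipulation to go through without modification in our setting.
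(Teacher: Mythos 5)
Your first and last paragraphs are, in substance, the paper's entire proof: the authors dispose of this lemma in one sentence, noting that pair symmetry of $\bar R$ (established in Proposition \ref{nkcurv}) is the only structural input to the argument of \cite[Prop.~6.6]{HMS}, which therefore carries over from the Riemannian to the nearly K\"ahler setting. If you had left it at that, your proposal would coincide with the paper's.

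The reconstruction you insert in between, however, has a genuine gap, and it starts with the choice of domain. ``Symmetric tensors'' in the lemma means totally symmetric tensors $K\in\Sym^p\T$ (as in \cite[Prop.~6.6]{HMS}), not $K\in\Sym^2(\Lambda^2\T)$. This is not a cosmetic point. If $q(\bar R)\ge 0$ held on $\Sym^2(\Lambda^2\T)$, the proof of Theorem \ref{nk-hom} could pair $q(\bar R)R^K$ with $R^K$ directly; the detour there through the holomorphic sectional curvature $S\in\Sym^4\T$ exists precisely because non-negativity of $q(\bar R)$ is only available on totally symmetric tensors. Moreover, your own computation shows why the argument collapses on $\Sym^2(\Lambda^2\T)$: diagonalizing $K$ in an orthonormal eigenbasis $\{\o_\a\}$ of $\Lambda^2\T$ produces coefficients $\bar R_{\a\b}$ equal (up to sign and normalization) to $\bar R$ evaluated on the bivector $[\o_\a,\o_\b]$, which is in general \emph{not} simple. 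Non-negativity of $\bar R$ on non-simple bivectors is a curvature-operator--type condition that does not follow from non-negative sectional curvature, even at the purely algebraic level; hence the ``normal form / averaging over simple bivectors'' step that you yourself flag as the main obstacle cannot be carried out --- there is no expression of $\bar R(\eta,\eta)$ for non-simple $\eta$ as a non-negative combination of sectional curvatures. By contrast, on $\Sym^2\T$ the eigenbasis lies in $\T$ and the analogous coefficients are honest sectional curvatures $\bar R(e_i,e_j,e_j,e_i)$, which is exactly where the hypothesis enters; and for the case actually used in Theorem \ref{nk-hom}, namely $p=4$, the spectral-theorem step is unavailable altogether (a symmetric $4$-tensor is not an endomorphism and cannot be diagonalized), so one really does need the argument of \cite{HMS} for general $p$ rather than the Berger-type computation you sketch.
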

\proof
Since the curvature $\bar R$ is pair symmetric the proof of the  corresponding statement for the Riemannian curvature carries over
to the present situation (see the proof of \cite[Prop. 6.6.]{HMS}).
\qed

\begin{ath}\label{nk-hom}
A  compact, simply connected strict nearly K\"ahler manifold $(N,g,J) $ such that the sectional curvature of $\bar R$ is non-negative,  is a naturally reductive homogeneous space.
\end{ath}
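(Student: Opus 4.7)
The plan is to deduce that $\bar\nabla\bar R = 0$ from the Weitzenböck formula of Proposition \ref{wbf-curv}, combined with the non-negativity statement of Lemma \ref{non-neg}, and then to conclude by invoking the Ambrose-Singer theorem.

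The first step is to take the global $L^2$-pairing of both sides of the Weitzenböck formula with $\bar R$. Since $\bar R\in\Sym^2\Lambda^{1,1}\T\subset\Sym^2(\Lambda^2\T)$ by Proposition \ref{nkcurv}, and $\pi_{\Sym^2}$ is the orthogonal projection onto $\Sym^2(\Lambda^2\T)$, hence self-adjoint with $\pi_{\Sym^2}(\bar R)=\bar R$, the projection on the left-hand side of the Weitzenböck identity is harmless when testing against $\bar R$. Integrating by parts on the compact manifold $N$ then yields
\[
\int_N |\bar\nabla\bar R|^2\,dv \;=\; \int_N \langle\bar\nabla^*\bar\nabla\bar R,\bar R\rangle\,dv \;=\; -\tfrac12\int_N\langle q(\bar R)\bar R,\bar R\rangle\,dv.
\]
By the hypothesis on the sectional curvature of $\bar R$, Lemma \ref{non-neg} gives $q(\bar R)\ge 0$ on $\Sym^2(\Lambda^2\T)$; applied pointwise to $\bar R$ itself, this makes the right-hand side non-positive, while the left-hand side is manifestly non-negative. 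Both sides must therefore vanish, forcing $\bar\nabla\bar R\equiv 0$.

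In the second step, I would invoke the Ambrose-Singer theorem in the refined form due to Tricerri-Vanhecke for naturally reductive spaces: a complete, simply connected Riemannian manifold admitting a metric connection with parallel skew-symmetric torsion and parallel curvature is a naturally reductive homogeneous Riemannian manifold. The canonical connection $\bar\nabla$ is metric, its torsion $\tau$ is a $\bar\nabla$-parallel $3$-form by construction, and by the previous step its curvature $\bar R$ is also $\bar\nabla$-parallel. Compactness of $N$ provides completeness, and simple connectedness is part of the hypothesis, so the theorem applies and gives the desired conclusion.

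I do not expect any significant obstacle in this argument beyond careful bookkeeping of signs and symmetries; the substantive work has already been done in Propositions \ref{nkcurv} and \ref{wbf-curv} and in Lemma \ref{non-neg}. The only point that requires a brief comment is that the projection $\pi_{\Sym^2}$ can be freely removed when pairing against $\bar R$, which is immediate from $\bar R\in\Sym^2(\Lambda^2\T)$. The strategy is a direct analogue of the integration argument used by Gray in the Kähler case, with the canonical connection $\bar\nabla$ playing the role of the Levi-Civita connection and the decomposition $\bar R=R^K+R^0$ of Proposition \ref{nkcurv} ensuring that $R^0$ contributes nothing to $\bar\nabla\bar R$.
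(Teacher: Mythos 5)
Your overall architecture (integrate the Weitzenböck formula, conclude $\bar\nabla\bar R=0$, invoke Ambrose--Singer) is the right one, and your handling of the projection $\pi_{\Sym^2}$ and of the final homogeneity step is fine. But there is a genuine gap at the central point: you claim that Lemma \ref{non-neg} gives $q(\bar R)\ge 0$ on $\Sym^2(\Lambda^2\T)$ and apply it directly to $\bar R$. The lemma asserts no such thing: it says $q(\bar R)\ge 0$ \emph{on symmetric tensors}, i.e.\ on $\Sym^p\T$, which is what non-negative sectional curvature actually controls (the relevant diagonal entries of $q(\bar R)$ on $\Sym^p\T$ are sums of sectional curvatures, cf.\ \cite[Prop.~6.6]{HMS}). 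Non-negative sectional curvature does \emph{not} imply that the curvature operator is positive semi-definite on $\Lambda^2\T$, hence not on $\Sym^2(\Lambda^2\T)$ either, and $\bar R$ lives in the latter space. So the inequality $\langle q(\bar R)\bar R,\bar R\rangle\ge 0$ does not follow from what you have, and the integration argument stalls exactly there.

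This is where the decomposition $\bar R=R^K+R^0$ of Proposition \ref{nkcurv} does its real work, which is not (as you suggest) merely that $R^0$ is parallel. Since $q(\bar R)R^0=0$, the identity reduces to $\bar\nabla^*\bar\nabla R^K=-\tfrac12\,q(\bar R)R^K$ with $R^K$ a curvature tensor \emph{of Kähler type}. One then replaces $R^K$ by its holomorphic sectional curvature $S$, which is a genuine symmetric $4$-tensor; the map $R^K\mapsto S$ is injective on Kähler curvature tensors (\cite[Lem.~2.2]{JW}) and equivariant, so the sign of $\langle q(\bar R)R^K,R^K\rangle$ can be read off from $\langle q(\bar R)S,S\rangle$, and \emph{that} is where Lemma \ref{non-neg} legitimately applies. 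Without passing through the Kähler-type part and its holomorphic sectional curvature, the non-negativity hypothesis cannot be brought to bear on the curvature tensor itself; this transfer is the substantive content of the proof (and of Gray's original argument), not mere bookkeeping.
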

\proof
We write $\bar R = R^{K}+R^{0}$ as in Proposition \ref{nkcurv}. Since $R^{0} $ is $\bar \nabla$-parallel, $q(\bar R)R_0=0$, so from Proposition \ref{wbf-curv} and Remark \ref{proj}
we get
$$
\bar \nabla^* \bar \nabla   R^{K} \,  = \, \bar \nabla^* \bar \nabla   \bar R \,  = \,   - \frac12 q(\bar R) \bar R  \,  = \,   -\frac12 \,  q(\bar R)   R^{K} \ .
$$
Taking the scalar product with $R^K$ yields 
$$g(R^K,\bar \nabla^* \bar \nabla   R^{K})=-\frac12\,g(R^K,  q(\bar R)   R^{K})\ ,$$
so after integration we find
$$ \| \bar \nabla R^{K}\|^2_{L^2} = -\frac12 \, \la q(\bar R) R^{K}, R^{K}\ra_{L^2} \ .$$
On the other hand, as in the proof of \cite[Lem. 2.4]{MSW} we can replace $R^{K}$ by its holomorphic sectional curvature $S$, which is
a symmetric $4$-tensor. Note that the map $R^{K} \mapsto S$ is injective (see \cite[Lem. 2.2]{JW}).  Then Lemma \ref{non-neg} implies that $g(R^K,  q(\bar R)   R^{K})\ge0$ at every point, whence by the above integral formula $\bar \nabla R^{K} = 0$ and finally $\bar \nabla \bar R = 0$. It follows that the canonical connection $\bar \nabla$
of the nearly K\"ahler manifold $(M,g,J) $ has parallel torsion and parallel curvature. Hence $M$ is homogeneous by the
Ambrose-Singer Theorem \cite{AS}. 
\qed

\bigskip

\section{The twistor space of quaternion-K\"ahler manifolds}

\subsection{The nearly K\"ahler structure of the  twistor space}

Let $(M^{4n}, g^M, \E)$ be a quat\-ernion-K\"ahler manifold, where $\E \subset \End (\T M)$ is the rank 3 parallel subbundle of $ \End (\T M)$  locally spanned by
almost complex structures $I, J, K$ compatible with the metric and satisfying the quaternionic relation $I J = K$. Such a triple $\{ I, J, K \}$ will be called a local quaternionic frame, and will sometimes be written as $\{J_\a\}$, $\a=1,2,3$.
We denote by $\nabla^M$ the Levi-Civita
connection of $g^M$, with $R^M_{X, Y} := \nabla^M_X \nabla^M_Y - \nabla^M_Y \nabla^M_X - \nabla^M_{[X, Y]}$ its curvature tensor, and with  $\s$  the scalar curvature of $R^M$.
The metric $g^M$ is then automatically Einstein as already mentioned in the introduction. 


The {\it twistor space} of $M$ is defined as
$\mathcal Z M := \{ I \in \E \,|\, I^2 = - \Id \}$. It is the total space of a $\CM P^1$-fibration $\mathcal Z M  \rightarrow M$. 
The connection on $\E$ induced by the Levi-Civita of $g^M$ defines the decomposition 
$ \T \mathcal Z M = \mathcal H \oplus \mathcal V$ of the tangent bundle of $\mathcal Z M$ into the horizontal and the vertical tangent spaces. The corresponding decomposition of tangent vectors
$\xi \in \T \mathcal Z M$ will be written as $\xi  = \xi^H + \xi^V$. At a point $I \in\mathcal Z M  $, the
vertical tangent space, i.e. the tangent space to the fibres,  can be written as $\mathcal V_I = \{ A^* \in \E \,|\, A I + IA = 0\}$.
Here and below we will write $A^*$ for an endomorphism $A \in \E$ considered as tangent vector, under the canonical identification of
a vector space with its tangent space at a point. 

On $\mathcal Z M$ we consider the family of metrics $g_c = \pi^* g^M + g_c^{\mathcal V}$ ($c>0$), with the vertical part $g_c^{\mathcal V}$ defined as the restriction $g^\E_c|_{\mathcal V}$
of the metric $g^\E_c(A^*, B^*) = - c^2 \tr (AB) $. For every choice of $c$, $\pi: \mathcal Z M \rightarrow M$ is a Riemannian submersion with
totally geodesic fibres. Since $|A^*|^2 = 4nc^2$ for all $A \in \mathcal Z M$, the twistor fibres $\mathcal Z M_p = \pi^{-1}(p)$, are $2$-spheres of radius
$c \sqrt{4n} $ in the Euclidean vector space $(\E_p, g^\E_c)$. In particular the fibres  $\mathcal Z M_p$ have sectional curvature $\frac{1}{4nc^2}$.

For a tangent vector $X \in \T_pM$ we denote with $\tilde X \in \T_I \mathcal Z M$ its horizontal lift at $I \in \mathcal ZM_p$. The
twistor space $\mathcal Z M$ carries two natural almost complex structures $J^\varepsilon$. They are defined on the horizontal part $\mathcal H_I$ by
$J^\varepsilon (\tilde X)_I = \widetilde{IX}$ and on the vertical part  $\mathcal V_I$ by $J^\varepsilon (A^*)_I = \varepsilon (IA)^*$, with
$\varepsilon = \pm 1$.

\begin{elem}\label{com}
For all vertical vector fields  $A^*$ and  vector fields $X, Y$ on $M$ we have
\begin{enumerate}
\item[(i)]  \;  $J^\varepsilon [A^*, \tilde X] = [J^\varepsilon A^*, \tilde X]$;
\medskip
\item[(ii)]\; $   [A^*, J^\varepsilon \tilde X]^{\mathcal H} = \widetilde{A X} $;
\medskip
\item[(iii)]  \;  $ [\tilde X, \tilde Y]_I = \widetilde{[X, Y]} - [R^M_{X,Y}, I]^*$ in a point $ I \in \mathcal Z M $.
\end{enumerate}
\end{elem}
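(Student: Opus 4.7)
My plan is to verify the three identities at an arbitrary point $(p, I) \in \mathcal{Z} M$, by trivializing $\mathcal{Z} M$ locally through a $\nabla^M$-parallel frame $\{I_0, J_0, K_0\}$ of $\E$ at $p$ (obtained by radial parallel transport), which makes the connection 1-form of $\nabla^M$ on $\E$ vanish at $p$. This setup turns horizontal lifts into nearly-linear objects near the base point and isolates the curvature as the only non-trivial contribution.

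I would start with (iii), which is the classical Ehresmann-type formula relating Lie brackets of horizontal lifts to the curvature of the connection. Since $\pi$ is a Riemannian submersion with totally geodesic fibres, $d\pi[\tilde X, \tilde Y] = [X,Y]$, giving the horizontal part $\widetilde{[X,Y]}$. The vertical part at $I$ is the curvature of the induced connection on $\E$ applied to $I$; because $\E \subset \End(\T M)$ is parallel, the induced connection is the restriction of the natural connection on $\End(\T M)$, whose curvature acts on an endomorphism by commutator with $R^M_{X,Y}$, producing $[R^M_{X,Y}, I]^*$. The overall sign is fixed by the standard relation $d\omega(\tilde X, \tilde Y) = -\omega([\tilde X, \tilde Y])$ on horizontal fields.

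For (i), I would first observe that both sides are vertical: for any $g \in C^\infty(M)$,
\[
[A^*, \tilde X](g \circ \pi) \;=\; A^*\big((Xg) \circ \pi\big) - \tilde X\big(A^*(g \circ \pi)\big) \;=\; 0,
\]
since $A^*$ annihilates pullbacks from $M$; the same argument applies to $[J^\varepsilon A^*, \tilde X]$. It then suffices to prove $(\mathcal L_{\tilde X} J^\varepsilon)(A^*) = 0$ on verticals, which, after expanding the Lie derivative, is equivalent to (i). Now the flow $\phi_t$ of $\tilde X$ sends $(p, I)$ to $(\gamma(t), P_t I)$ with $P_t$ the $\nabla^M$-parallel transport along the integral curve $\gamma$ of $X$, and its differential on the vertical subspace again acts by $P_t$. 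Since $P_t$ is an algebra automorphism of $\E$ (quaternionic multiplication is parallel), it commutes with left multiplication by $I$, which is the defining operation of $J^\varepsilon$ on verticals; hence $d\phi_t$ and $J^\varepsilon$ commute on the vertical subbundle, and the Lie derivative vanishes.

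For (ii), I would compute the horizontal projection directly via test functions. For $g \in C^\infty(M)$,
\[
(J^\varepsilon \tilde X)(g \circ \pi)\big|_{(q, I')} \;=\; dg(I' X_q),
\]
since $d\pi(J^\varepsilon \tilde X|_{(q,I')}) = I' X_q$. Applying $A^*|_{(p,I)}$ — which differentiates only in the fibre direction, moving $I$ along $A$ to first order — yields $(AX_p)(g)$; combined with $A^*(g\circ\pi) = 0$ this gives $d\pi[A^*, J^\varepsilon \tilde X]|_{(p,I)} = AX_p$, whence the horizontal component of the bracket is $\widetilde{AX}$. The main technical care goes into (iii), where one must chase sign conventions between the connection and curvature forms on $\E$; identities (i) and (ii) become nearly automatic once one exploits the fact that the verticality of $A^*$ forces all $d\pi$-type terms in the bracket to collapse.
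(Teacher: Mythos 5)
Your proof is correct, but it is worth noting that the paper does not actually prove this lemma at all: it simply refers the reader to Besse (\S 14.72, equation (14.72), and (9.53a)--(9.53b)) and declares the horizontal part of (iii) ``clear''. What you supply is the self-contained verification that those references encode, and the arguments you choose are the standard ones: for (iii), the Ehresmann identity $\omega([\tilde X,\tilde Y])=-\Omega(\tilde X,\tilde Y)$ for the induced connection on $\E$, together with the fact that the curvature of $\End(\T M)$ acts by $A\mapsto [R^M_{X,Y},A]$; for (i), the reformulation as $(\LieD_{\tilde X}J^\e)|_{\mathcal V}=0$ and the observation that the flow of $\tilde X$ acts fibrewise by parallel transport, which is an algebra automorphism of $\E$ and hence intertwines left multiplication by $I$ with left multiplication by $P_tI$; for (ii), the test-function computation $[A^*,J^\e\tilde X](g\circ\pi)=A^*\bigl(dg(I'X)\bigr)=dg(AX)$, which uses the linearity of $I'\mapsto dg(I'X_p)$ on the fibre. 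All three steps are sound, including the sign in (iii) and the preliminary check that $[A^*,\tilde X]$ is vertical. Two cosmetic remarks: the radial parallel frame announced at the outset is never actually needed in your arguments, and in (i) the phrase ``commutes with left multiplication by $I$'' should be read as the intertwining property just described, since the base point of the fibre changes along the flow. Compared with the paper, your route costs a page of standard submersion bookkeeping but buys independence from Besse's sign conventions, which the authors themselves flag as error-prone elsewhere in Section 3.
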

\proof
These formulae can be found in \cite{B}: the argument for $(i)$ is in \S14.72, and $(ii)$ is exactly equation $(14.72)$. The 
horizontal part of $(iii)$ is clear. The vertical part follows from (9.53a) and (9.53b).
\qed

\begin{elem}\label{curv}
Let $\{I, J, K\}$ be a local quaternionic frame. Then the relation
$$
[R^{M}_{X,Y}, I] = \frac{\s}{4n(n+2)} \big(  - g^M(KX, Y) J^* + g^M(JX, Y) K^* \,  \big)
$$
is true for all $X, Y \in \T_pM$. The formula holds for all even permutations of $I, J, K$.
\end{elem}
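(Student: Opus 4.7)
The plan is to exploit the well-known $\Sp(n)\cdot\Sp(1)$-equivariant decomposition of the curvature of a quaternion-Kähler manifold: for $(M^{4n},g^M,\E)$ with $n\ge 2$ one has
$$
R^M \;=\; \frac{\s}{16n(n+2)}\,R^{\mathbb{H} P^n} + W,
$$
where $R^{\mathbb{H} P^n}$ denotes the curvature of quaternionic projective space (normalized so that its scalar curvature equals $16n(n+2)$) and $W$ is the quaternionic Weyl tensor, taking values pointwise in $\sp(n)\subset\so(\T_pM)$. Since $\sp(n)$ is precisely the centralizer of $\E$ inside $\so(\T_pM)$, one has $[W_{X,Y},I]=0$, so the entire contribution to $[R^M_{X,Y},I]$ comes from the model piece and it suffices to compute $[R^{\mathbb{H} P^n}_{X,Y},I]$.

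Next I would identify the $\sp(1)$-component of $R^{\mathbb{H} P^n}(X,Y)$, which is the only part with nonzero commutator against $I$. From the explicit formula
$$
R^{\mathbb{H} P^n}(X,Y)Z \,=\, g^M(Y,Z)X-g^M(X,Z)Y + \sum_{\a=1}^{3}\bigl(g^M(J_\a Y,Z)J_\a X - g^M(J_\a X,Z)J_\a Y - 2\,g^M(J_\a X,Y)J_\a Z\bigr),
$$
one checks by a routine calculation that the $\sp(1)$-part equals $-2\sum_\a g^M(J_\a X,Y)J_\a$, with the remainder being $\sp(n)$-valued. A short commutator computation using the quaternionic relations $[J,I]=-2K$ and $[K,I]=2J$ then gives
$$
[R^{\mathbb{H} P^n}(X,Y),I] \,=\, 4\,g^M(JX,Y)\,K \,-\, 4\,g^M(KX,Y)\,J,
$$
and multiplication by the scalar $\s/(16n(n+2))$ produces exactly the claimed formula. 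The analogous identities for the other even permutations of $(I,J,K)$ then follow by the $\SO(3)$-symmetry of the triple inside $\E$.

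The one step that requires external input is the curvature decomposition itself, which is classical for QK manifolds with $n\ge 2$ and would be cited rather than reproved. If instead a self-contained derivation is preferred, the plan would be as follows: since $\E$ is $\nabla^M$-parallel, the Ricci identity applied to a local section of $\E$ shows that $[R^M_{X,Y},I]$ lies in $\E$; differentiating the relation $I^2=-\Id$ then shows it anticommutes with $I$. Hence it has the form $\a(X,Y)J^* + \b(X,Y)K^*$ for some bilinear forms $\a,\b$, which are identified by trace-pairing with $J$ and $K$ (the factor $1/(4n)$ arising from $|J_\a|^2=4n$). The remaining task is to express the traces $\tr(J R^M_{X,Y})$ and $\tr(K R^M_{X,Y})$ in terms of $\s$, which is achieved via the Einstein condition and the first Bianchi identity; the careful bookkeeping of these trace identities is the main technical nuisance of this alternative route.
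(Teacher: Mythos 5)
Your proposal is correct, but it takes a different route from the paper: the paper gives no computation at all for this lemma, simply citing \cite[Thm.~14.39]{B} and warning that the factor $2$ in \cite[Lem.~14.40]{B} must be deleted. Your derivation via the Alekseevski\u{\i}--Salamon decomposition $R^M=\nu R^{\mathbb{H}P^n}+W$ with $W$ valued in $\sp(n)$ is a genuine proof, and the details check out: with your normalization one finds $\Ric(R^{\mathbb{H}P^n})=4(n+2)g^M$, hence $\nu=\s/(16n(n+2))$; the part $X\wedge Y+\sum_\a J_\a X\wedge J_\a Y$ of the model curvature indeed centralizes $I,J,K$ (the $\a=0$ and $\a=1$ contributions to $[\,\cdot\,,I]$ cancel, as do $\a=2$ and $\a=3$), so the only $\sp(1)$-contribution is $-2\sum_\a g^M(J_\a X,Y)J_\a$; and with $[J,I]=-2K$, $[K,I]=2J$ this gives $\nu\cdot 4\bigl(g^M(JX,Y)K-g^M(KX,Y)J\bigr)=\frac{\s}{4n(n+2)}\bigl(-g^M(KX,Y)J+g^M(JX,Y)K\bigr)$, exactly the stated formula. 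What your approach buys is precisely what the paper's citation does not: an independent check of the constant, which matters here since the paper explicitly flags a normalization error in the standard reference. The only caution is that the curvature decomposition you invoke \emph{is} essentially \cite[Lem.~14.40]{B}, so if you cite it rather than your own normalization you must fix the scalar-curvature normalization of $R^{\mathbb{H}P^n}$ yourself (as you did) rather than take Besse's constant on faith. Your sketched second, self-contained route (Ricci identity $\Rightarrow [R^M_{X,Y},I]\in\E$ and anticommutes with $I$, then trace-pairing against $J$ and $K$) is also viable, but as you note the reduction of $\tr(JR^M_{X,Y})$ to $\s$ via the Einstein condition and the first Bianchi identity is the real work there and is left undone.
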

\proof
The formula appears in \cite[Thm. 14.39]{B}. Note that  in  \cite[Lem. 14.40]{B} the wrong factor $2$ has to be deleted.
\qed
%
%

\medskip

It is well known that the twistor space of a positive quaternion-K\"ahler manifold is K\"ahler, respectively nearly K\"ahler for certain choices of the
parameter $\varepsilon$ and $c$. For the convenience of the reader and since we need the explicit form of the torsion $\tau$, which we
could not found in the literature, we give the following proposition together with its proof.

\begin{epr}\label{nk}
The twistor space $(\mathcal Z M, g_c, J^\varepsilon)$ is K\"ahler if and only if  $\varepsilon = 1$ and $c^2 = \frac{n+2}{\s}$, and 
nearly K\"ahler if and only if $\varepsilon = -1$ and $c^2 = \frac{n+2}{2 \,\s}$.
The torsion $\tau$ of the nearly K\"ahler structure on $\mathcal Z M$  is  a section of $ \Lambda^2 \mathcal H \otimes \mathcal V$.
At each point $I \in \mathcal Z M$, the torsion is given by
$$
\tau(A^*, \tilde X, \tilde Y)_I = \frac14 g^M(A I X, Y)\ ,
$$
for every $X, Y \in \T_pM$ and $A \in \mathcal V_I$.  In particular, the nearly K\"ahler structure of the twistor space is strict.
\end{epr}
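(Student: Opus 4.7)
The argument rests on the fact that $\pi\colon (\mathcal Z M, g_c) \to (M, g^M)$ is a Riemannian submersion with totally geodesic fibres, so O'Neill's $T$-tensor vanishes and, for horizontal basic fields $\tilde X, \tilde Y$ and a vertical vector field $U$, one has $\nabla^{g_c}_{\tilde X}\tilde Y = \widetilde{\nabla^M_X Y} + \tfrac12[\tilde X, \tilde Y]^{\mathcal V}$ and $\nabla^{g_c}_{U}\tilde X = A^{(c)}_{\tilde X}U$, where $A^{(c)}_{\tilde X}U \in \mathcal H$ is determined by duality from $g_c(A^{(c)}_{\tilde X}U, \tilde Y) = -\tfrac12 g_c(U, [\tilde X,\tilde Y]^{\mathcal V})$ (and thus picks up a factor $c^2$), while the vertical-vertical component of $\nabla^{g_c}$ reduces to the Levi-Civita connection of the round $2$-sphere fibre. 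Lemma \ref{com}(iii) identifies $[\tilde X, \tilde Y]^{\mathcal V}_I = -[R^M_{X,Y}, I]^*$, and Lemma \ref{curv} rewrites this in a quaternionic frame $\{I,J,K\}$ at $p = \pi(I)$ as an explicit combination of $J^*$ and $K^*$ with prefactor $\s/(4n(n+2))$.

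I would then compute $(\nabla^{g_c}_\xi J^\varepsilon)\eta$ in the four cases according to the horizontal/vertical type of $\xi$ and $\eta$. The vertical-vertical case is automatic, because each fibre carries a Kähler structure independently of $\varepsilon$. The mixed cases require Lemma \ref{com}(i)--(ii): although $\tilde X$ is horizontal basic, the field $J^\varepsilon \tilde X$ is not, since its value at $I$ is $\widetilde{IX}$, which depends on $I$; hence differentiating it along a vertical $A^*$ produces an additional horizontal contribution $\widetilde{AX}$ via the Leibniz rule. Collecting all contributions gives expressions linear in $\varepsilon$, $c^2$, and $\s$ for each component of $\nabla^{g_c}J^\varepsilon$. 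The Kähler condition $\nabla^{g_c}J^\varepsilon = 0$ then reduces to a system whose unique solution is $\varepsilon = 1$, $c^2 = (n+2)/\s$, whereas the nearly Kähler condition $(\nabla^{g_c}_\xi J^\varepsilon)\xi = 0$ --- equivalent to total antisymmetry of the $(0,3)$-tensor $(\xi, \eta, \zeta) \mapsto g_c((\nabla^{g_c}_\xi J^\varepsilon)\eta, \zeta)$ --- is solved uniquely by $\varepsilon = -1$, $c^2 = (n+2)/(2\s)$.

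In the nearly Kähler case, inspection of the surviving components shows that $\nabla^{g_c}J^\varepsilon$ is non-zero only in the mixed slot, so $\tau = -\tfrac12 J^\varepsilon\circ\nabla^{g_c}J^\varepsilon$ lies in $\Lambda^2\mathcal H \otimes \mathcal V$; the proportionality constant, read off from $(\nabla^{g_c}_{A^*}J^\varepsilon)\tilde X$, gives the announced formula $\tau(A^*, \tilde X, \tilde Y)_I = \tfrac14 g^M(AIX, Y)$. Strictness follows by showing $\tau_\xi \neq 0$ for every $\xi \neq 0$: writing $\xi = \tilde X + A^*$, either $A \neq 0$ and one picks $Y \in \T_p M$ with $AIY \neq 0$ so that $\tau(\xi, \tilde Y, \widetilde{AIY}) = \tfrac14 |AIY|^2 \neq 0$, or else $A = 0$ and $X \neq 0$, in which case $\tau(\xi, \widetilde{KX}, J^*) = -\tfrac14 |X|^2$ using the quaternionic frame $\{I,J,K\}$. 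The main obstacle is to keep careful track of the powers of $c$ coming from the vertical rescaling in O'Neill's formulas and to match sign conventions between Lemma \ref{curv}, the Leibniz-rule terms of Lemma \ref{com}, and the two candidate signs $\varepsilon = \pm 1$, so as to land on exactly the stated values of $c$ and $\varepsilon$.
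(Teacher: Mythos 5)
Your proposal follows essentially the same route as the paper: a componentwise computation of $\nabla^{g_c}J^\varepsilon$ along the splitting $\T\mathcal{Z}M=\mathcal{H}\oplus\mathcal{V}$, driven by Lemma \ref{com}, Lemma \ref{curv} and the totally geodesic Riemannian submersion structure with the $c^2$-rescaled vertical metric (the paper organizes the horizontal/vertical bookkeeping via the Koszul formula rather than via O'Neill's tensors, but these are the same computation, as Lemma \ref{torsion} makes explicit). Your strictness argument is in fact slightly more careful than the paper's, which treats only the case of a nonzero vertical component explicitly; otherwise the two arguments agree, with the central arithmetic determining $\varepsilon$ and $c^2$ left as a (correctly set up) routine verification.
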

\proof
In order to simplify the notation, we will denote throughout the proof the metric $g_c$ by $g$ and its Levi-Civita connection by $\nabla$. We need to compute the covariant derivative $\nabla J^\varepsilon$. According to the splitting $\T \mathcal Z M = \mathcal H \oplus \mathcal V$, we consider several cases. 

First we note that $(\nabla_{A^*} J^\varepsilon) B^* =  \nabla_{A^*} J^\varepsilon B^* - J^\varepsilon  \nabla_{A^*}  B^* = 0$, since the fibres
are totally geodesic, and $J^\varepsilon$ restricted to the fibres is parallel, being the standard complex structure on $S^2 = \CM P$. 

Next we compute using the Koszul formula 
\bea
2g((\nabla_{\tilde X} J^\varepsilon) A^*, B^*) &=&  2g(\nabla_{\tilde X} J^\varepsilon) A^*, B^*)  + 2g(\nabla_{\tilde X} A^*,  J^\varepsilon B^* )\\ [.5ex]
&=&
\tilde X g(J^\varepsilon A^*, B^{*})  + g([\tilde X, J^\varepsilon A^*], B^*) + g([B^*, \tilde X], A^*) \\ [.5ex]
&&
\quad + \; \tilde X g(A^*, J^\varepsilon  B^{*})  + g([\tilde X,  A^*], J^\varepsilon  B^*) + g([J^\varepsilon  B^*, \tilde X], A^*) \ .
\eea
Since $J^\varepsilon$ is skew-symmetric and using Lemma \ref{com} (i) we see that the summands cancel pairwise and we obtain
$g((\nabla_{\tilde X} J^\varepsilon) A^*, B^*)=0$.

\noindent
As a consequence we also have  $g((\nabla_{A^*} J^\varepsilon) \tilde X, B^*) = - g((\nabla_{A^*} J^\varepsilon) B^*, \tilde X) = 0$.

\noindent
We next claim that $g((\nabla_{\tilde X} J^\varepsilon ) \tilde Y, \tilde Z) = 0$ holds  for all vector fields $X, Y, Z$ on $M$. Indeed,  applying again the 
Koszul formula we find
\bea
2g((\nabla_{\tilde X} J^\varepsilon ) \tilde Y, \tilde Z)  &=& 
\tilde X g(J^\varepsilon  \tilde Y, \tilde Z) + (J^\varepsilon \tilde Y) g(\tilde Z, \tilde X) - \tilde Z g(\tilde X, J^\varepsilon  \tilde Y)\\[.5ex]
&&
+ \quad g([\tilde X, J^\varepsilon \tilde Y ], \tilde Z) - g([J^\varepsilon \tilde Y, \tilde Z], \tilde X) + g([\tilde Z, \tilde X], J^\varepsilon  \tilde Y) \ .
\eea
We can assume  $X,Y,Z$ and $I \in \mathcal Z M_p$ to be parallel at $p$. Then it is clear the the second and the last summand in
the sum above vanish at $I$. Moreover we compute at $I$:
$$
\tilde X g(J^\varepsilon \tilde Y, \tilde Z) = \left.\frac{d}{dt}\right|_{t=0} g(I_t Y, Z) = 0 \ ,
$$
where $I_t$ is the parallel transport of $I$ along the flow of $X$. Hence the scalar product is constant. It follows that the first and third
summands in the sum above vanish as well. If $\varphi_t$ denotes the flow of $\tilde X$, then $\varphi_t(I) = I_t$ and $(J^\varepsilon \tilde Y)_{I_t} = \widetilde{I_tY}$.
For the remaining two summands we compute
$
[\tilde X, J^\varepsilon \tilde Y ]^{\mathcal H} = [\tilde X, \widetilde{AY}]^{\mathcal H}  \ ,
$
where $A$ is an endomorphism on $M$ with $A_p=I$ and parallel along the integral curve of $X$ through $p$.
Then the commutator vanishes since we have $[X,AY] =0$ at $p$.

The parameter $\varepsilon $ and $c$ will be fixed by the following calculations. For every $I\in \mathcal{Z}M$ and $A^*\in\mathcal{V}_I$ we have
$$
g((\nabla_{\tilde X} J^\varepsilon) A^*, \tilde Y)_I = g(\nabla_{\tilde X} (J^\varepsilon A^*), \tilde Y)_I + g(\nabla_{\tilde X} A^*, J^\varepsilon \tilde Y)_I
= -g(\nabla_{\tilde X} \tilde Y,J^\varepsilon A^*)_I + g(\nabla_{\tilde X} A^*, \widetilde{IY})_I \ .
$$

We will calculate the two summands separately. If we write $A = a J + bK$, then $(IA)^* = aK^* - bJ^*$, so using the Koszul formula, Lemma \ref{com} (3) and Lemma \ref{curv}, together with $|K^*|^2 = |J^*|^2 = 4nc^2$, we obtain
\bea
 -g(\nabla_{\tilde X} \tilde Y,J^\varepsilon A^*)_I  &=& \frac12 \,g([\tilde Y, \tilde X], \varepsilon (IA)^*)_I \\
&=&
\frac{\s}{8n(n+2)} \, g \big( - g^M(KX,Y) J^*  + g^M(JX, Y)K^*, \varepsilon (IA)^*\big)_I \\
&=&
\frac{\s \, \varepsilon \, c^2}{2(n+2)} \, g^M(AX, Y)\ .
\eea

Similarly we compute (by extending $I$ to a local section of $\E$):
\bea
g(\nabla_{\tilde X} A^*, \widetilde{IY})_I &=& \frac12 g([\widetilde{IY},  \tilde X],A^*)_I\\
&=&
\frac{\s}{8n(n+2)} g \big( - g^M(KX,IY) J^*  + g^M(JX, IY)K^*, A^*\big)_I \\
&=&
- \frac{\s  \, c^2}{2(n+2)} g^M(AX, Y)\ .
\eea

Combining these two calculations we get: \, $g((\nabla_{\tilde X} J^\varepsilon) A^*, \tilde Y)_I =  \frac{\s  \, c^2}{2(n+2)} (\varepsilon -1)g^M(AX, Y)$.
For later use we note that the formula for the second summand also proves the equation
\be\label{com1}
g([ \widetilde{IY}, \tilde X], A^*)_I \,=\, - \frac{\s  \, c^2}{n+2} \,g^M(AX, Y) \ .
\ee

Finally we have to compute the expression $$g((\nabla_{A^*} J^\varepsilon) \tilde X, \tilde Y)_I = g(\nabla_{A^*} (J^\varepsilon \tilde X), \tilde Y)_I   +  g(\nabla_{A^*} \tilde X,  \widetilde{IY})_I\ .$$
Since  $[\tilde X, A^*]^H=0$, the second summand follows from the calculation in the previous case: 
$$g(\nabla_{A^*} \tilde X,  \widetilde{IY})_I=g(\nabla_{\tilde X}{A^*},  \widetilde{IY})_I=- \frac{\s  \, c^2}{2(n+2)} g^M(AX, Y)\ .$$ 
For the first summand we again use the Koszul
formula to obtain
\be\label{axy}
g(\nabla_{A^*} J^\varepsilon \tilde X, \tilde Y)_I = \frac12 \big(  A^* g(J^\varepsilon \tilde X, \tilde Y)_I + g([A^*, J^\varepsilon\tilde X], \tilde Y)_I - g([J^\varepsilon \tilde X, \tilde Y], A^*)_I\big) \ .
\ee
We fix a curve $A_t \in \mathcal Z M$ with $A_0 = I $ and $\dot A_0 = A$. Then the first summand in \eqref{axy} is equal to $\frac12 \frac{d}{dt}|_{t=0} g(\widetilde{A_tX}, \tilde Y) = \frac12 g^M(AX, Y) $.
Due to Lemma \ref{com}, the second summand in \eqref{axy} is equal to $\tfrac12 g(\widetilde{AX}, \tilde Y) = \frac12 g^M(AX, Y)$. At last, by tensoriality in $X,Y$ and
by \eqref{com1}, the third  summand in \eqref{axy} can be computed as
$
- \frac12 g([\widetilde{IX}, \tilde Y], A^*)_I = - \frac12 \frac{\s  \, c^2}{n+2} g^M(AX, Y)
$.
Altogether  we obtain
\be\label{mix}
g((\nabla_{A^*} J^\varepsilon) \tilde X, \tilde Y)_I =  \left(1 - \frac{\s  \, c^2}{n+2}  \right) g^M(AX, Y)\ .
\ee
Summarizing we see that $(g, J^\varepsilon)$ is K\"ahler, i.e. $J^\varepsilon$ parallel, if and only if  $\s \, c^2 = n+2$ and $\varepsilon = 1$. The twistor space 
 $(g, J^\varepsilon)$ is nearly K\"ahler, i.e. $(\nabla_{\tilde X} J^\varepsilon) A^* +  (\nabla_{A^*} J^\varepsilon) \tilde X = 0$, if and only if
 $\varepsilon = -1$ and $2 \,\s \, c^2 = n+2$.
 
 We still have to compute the torsion $3$-form $\tau$ of the nearly K\"ahler structure on $\mathcal Z M$. From the calculation of
 $\nabla J^\varepsilon$ given above it is clear that $\tau \in \Lambda^2 \mathcal H \otimes \mathcal V$. The explicit form of $\tau$ then follows from
 \eqref{mix} with $2 \, \s \, c^2 = n+2$ and $\varepsilon = -1$. We find
 $$
 \tau(A^*, \tilde X, \tilde Y)_I \, = \, \frac12 \, g((\nabla_{A^*} J^\varepsilon) \tilde X, J^{\varepsilon} \tilde Y)_I \,=\, \frac14 \, g^M(AX, IY) \,=\, \frac14 \, g^M(AIX, Y) \ .
 $$
 Here we use that $A$, representing the tangent vector $A^*$ in $\mathcal V_I$, anti-commutes with $I$.
 For the last statement we still have to show that the torsion has no kernel. Let $A^* + \tilde X$ be an arbitrary tangent vector to the twistor space. Then
 $$
 \tau ( A^* + \tilde X, \tilde X, \tilde Y) \, = \, \tau ( A^* , \tilde X, \tilde Y) \, = \,  \frac14 \, g^M(AIX, Y) \ .
 $$
 Clearly the term $g^M(AIX, Y)$ cannot be zero for all vectors $Y$. Hence the kernel of the torsion $\tau$ has to be trivial and thus the nearly K\"ahler structure 
 on the twistor space is strict.
\qed

\begin{ere}
Our values of $\varepsilon$  and $c$ in the K\"ahler respectively nearly K\"ahler case confirm the results in \cite{AGI}. However, the definition of the constant $c$ in \cite{AGI} has to be modified since it is not consistent with the rest of the calculations.
\end{ere}

From now on we will only consider the nearly K\"ahler structure on the twistor space $\mathcal Z M$, i.e. we fix $\varepsilon = - 1$ and 
$c^2 = \frac{n+2}{2\, \s}$. For this choice we will write $J=J^\varepsilon$, $g=g_c$, and denote as before by $\bar\nabla=\nabla^g+\tau$ the canonical connection of the nearly Kähler manifold $(\mathcal{Z}M,g,J)$.

\medskip

\begin{elem}\label{torsion}
For any vector fields $X, Y$ on $M$ and any vertical vector $V$ in $\mathcal V$ we have
\begin{enumerate}
\item[(i)] \; $ \bar \nabla_{\tilde X} \tilde Y = \widetilde{\nabla^M_X Y}$;
\medskip
\item[(ii)] \; $[\tilde X, \tilde Y]^{\mathcal V} = - 2 \tau_{\tilde X} \tilde Y$;
\medskip
\item[(iii)] \; $g(\bar \nabla_V \tilde X, \tilde Y) = 2 \tau(V, \tilde X, \tilde Y) = 2g (\tau_{\tilde X}\tilde Y, V)$.
\end{enumerate}
In particular, the torsion $\tau$ coincides (up to a sign) with the  O'Neill tensor $A$.
\end{elem}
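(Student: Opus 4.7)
The plan is to establish (ii) first, since it is the only part that requires genuine new input (namely Lemma \ref{com}(iii), Lemma \ref{curv} and the explicit torsion formula of Proposition \ref{nk}); once (ii) is in hand, both (i) and (iii) follow from the standard Riemannian submersion calculus combined with the fact, also from Proposition \ref{nk}, that $\tau\in\Lambda^2\mathcal H\otimes\mathcal V$. The O'Neill tensor identification is then immediate from the definition $A_{\tilde X}\tilde Y=\tfrac12[\tilde X,\tilde Y]^{\mathcal V}$ and (ii).

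For (ii) I would work at a point $I\in\mathcal Z M_p$ and test both sides against an arbitrary vertical vector $A^*\in\mathcal V_I$, expanding $A=aJ+bK$ in a local quaternionic frame $\{I,J,K\}$. The left-hand side is, by Lemma \ref{com}(iii), $g([\tilde X,\tilde Y]^{\mathcal V},A^*)=-g([R^M_{X,Y},I]^*,A^*)$; Lemma \ref{curv} expresses $[R^M_{X,Y},I]$ as an explicit linear combination of $J^*,K^*$ with coefficient $\sigma/(4n(n+2))$, and then the metric $g^{\mathcal E}_c(P^*,Q^*)=-c^2\,\mathrm{tr}(PQ)$ together with $|J^*|^2=|K^*|^2=4nc^2$, $g^{\mathcal E}_c(J^*,K^*)=0$ collapses this to $\tfrac{\sigma c^2}{n+2}\bigl(ag^M(KX,Y)-bg^M(JX,Y)\bigr)$. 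The right-hand side is computed via Proposition \ref{nk}: using total skew-symmetry $g(\tau_{\tilde X}\tilde Y,A^*)=\tau(A^*,\tilde X,\tilde Y)=\tfrac14 g^M(AIX,Y)$, and then $AI=-aK+bJ$ (from the quaternionic relations and $AI=-IA$) gives $-2g(\tau_{\tilde X}\tilde Y,A^*)=\tfrac12\bigl(ag^M(KX,Y)-bg^M(JX,Y)\bigr)$. The nearly Kähler normalization $c^2=(n+2)/(2\sigma)$ makes $\sigma c^2/(n+2)=\tfrac12$, and the two expressions agree.

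For (i) the horizontal part is given by the standard Riemannian-submersion identity $(\nabla^g_{\tilde X}\tilde Y)^{\mathcal H}=\widetilde{\nabla^M_X Y}$, valid because $\pi\colon\mathcal Z M\to M$ has totally geodesic fibres; the vertical part is the O'Neill contribution $\tfrac12[\tilde X,\tilde Y]^{\mathcal V}$, which by (ii) equals $-\tau_{\tilde X}\tilde Y$. Since $\tau_{\tilde X}\tilde Y$ is vertical (because $\tau\in\Lambda^2\mathcal H\otimes\mathcal V$), adding $\tau_{\tilde X}\tilde Y$ to $\nabla^g_{\tilde X}\tilde Y$ cancels the vertical part and leaves (i). For (iii) I would apply the Koszul formula to $2g(\nabla^g_V\tilde X,\tilde Y)$: the three derivative terms vanish because $g(\tilde X,\tilde Y)=\pi^*g^M(X,Y)$ is constant along fibres and $g(V,\tilde X)=g(V,\tilde Y)=0$, and the two bracket terms $g([V,\tilde X],\tilde Y)$, $g([\tilde Y,V],\tilde X)$ also vanish because $[V,\tilde X]$ and $[V,\tilde Y]$ project to zero and are therefore vertical. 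The only surviving term is $-g([\tilde X,\tilde Y]^{\mathcal V},V)=2g(\tau_{\tilde X}\tilde Y,V)$ by (ii), so that $g(\nabla^g_V\tilde X,\tilde Y)=\tau(V,\tilde X,\tilde Y)$, and adding the torsion contribution $g(\tau_V\tilde X,\tilde Y)=\tau(V,\tilde X,\tilde Y)$ yields (iii).

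The main obstacle is the sign and constant bookkeeping in step (ii): the anti-commutation $AI=-IA$, the cyclic ambiguities among $IJ=K,\ JK=I,\ KI=J$ (with opposite signs for the odd transpositions), and verifying that the factor $\sigma c^2/(n+2)$ collapses to exactly $\tfrac12$ under the nearly Kähler value of $c^2$. Once this verification is done cleanly, the rest of the lemma is essentially a direct translation of O'Neill's formalism into the canonical-connection language.
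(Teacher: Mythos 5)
Your proof is correct, but it follows a different route from the paper's. The paper derives all three statements formally from the single assertion that $\bar\nabla$ preserves the splitting $\T\mathcal ZM=\mathcal H\oplus\mathcal V$: granting that, (i) is immediate since $\tau_{\tilde X}\tilde Y$ is vertical, and (ii) follows by antisymmetrizing $\nabla_{\tilde X}\tilde Y=\bar\nabla_{\tilde X}\tilde Y-\tau_{\tilde X}\tilde Y$ and taking vertical parts; no use is made of Lemma \ref{curv} or of the explicit value of $c$. You instead prove (ii) head-on, by testing $[\tilde X,\tilde Y]^{\mathcal V}=-[R^M_{X,Y},I]^*$ (Lemma \ref{com}(iii) plus Lemma \ref{curv}) against a vertical vector and matching it with the explicit torsion formula of Proposition \ref{nk}, and then recover (i) and (iii) from O'Neill's formula and the Koszul formula. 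Your computation checks out: the coefficient $\tfrac{\s c^2}{n+2}$ does collapse to $\tfrac12$ under $c^2=\tfrac{n+2}{2\s}$, the sign $AI=-aK+bJ$ is right, and the Koszul terms you discard in (iii) do vanish. What your version buys is self-containedness: the paper's key premise --- that $\bar\nabla$ preserves the splitting --- is stated without proof, and its most natural justification is precisely the cancellation $\tau_{\tilde X}\tilde Y=-\tfrac12[\tilde X,\tilde Y]^{\mathcal V}$ that you verify explicitly; so your argument effectively supplies the missing verification, at the cost of being longer and of repeating a computation already latent in equation \eqref{com1} of the proof of Proposition \ref{nk}. The paper's version is shorter and cleaner once the splitting-preservation is accepted. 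One cosmetic point: the identity $(\nabla^g_{\tilde X}\tilde Y)^{\mathcal H}=\widetilde{\nabla^M_XY}$ holds for any Riemannian submersion; total geodesy of the fibres is not needed there (it is needed only to know that $\nabla^g_VW$ is vertical for vertical $V,W$, which you do not use).
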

\proof
The three statements are consequences of the fact that the connection $\bar \nabla$ preserves the splitting $\T \mathcal Z M = \mathcal H \oplus \mathcal V$
in horizontal and vertical vectors. 
For $(i)$ we compute
$$
 \bar \nabla_{\tilde X} \tilde Y \, = \, ( \bar \nabla_{\tilde X} \tilde Y )^{\mathcal H} \,=\, ( \nabla_{\tilde X} \tilde Y + \tau_{\tilde X } \tilde Y) ^{\mathcal H} \,=\, 
 ( \nabla_{\tilde X} \tilde Y) ^{\mathcal H} \,=\,  \widetilde{\nabla^M_X Y} \ ,
$$
where we also use  the property that the torsion  vanishes on three horizontal vectors. Similarly, $(ii)$ follows from:
$$
[\tilde X, \tilde Y]^{\mathcal V} \, = \, (\nabla_{\tilde X} \tilde Y - \nabla_{\tilde Y} \tilde X)^{\mathcal V}  \, = \, 
 (\bar\nabla_{\tilde X} \tilde Y - \tau_{\tilde X } \tilde Y - \bar \nabla_{\tilde Y} \tilde X + \tau_{\tilde Y} \tilde X)^{\mathcal V} \, =\,  - 2 \tau_{\tilde X } \tilde Y  \ .
$$
To prove $(iii)$ it is enough to compute the scalar product with a horizontal lift $\tilde Z$. The statement  then follows from:
$$
g(\bar \nabla_{\tilde X} \tilde Y, \tilde Z) \,=\, g(\nabla_{\tilde X} \tilde Y, \tilde Z) \,=\, g^M(\nabla^M_X Y, Z) \,=\,  g(\widetilde{\nabla^M_X Y}, \tilde Z) \ ,
$$
where for the first equality we used again the fact that the torsion vanishes on three horizontal vectors.
\qed


%
\subsection{The curvature of the nearly K\"ahler twistor space}
%

In this section we will describe the relation between the curvature $\bar R$ of the canonical  nearly K\"ahler connection of the  twistor 
space $\mathcal Z M$ and the  curvature  $R^M$ of the Levi-Civita connection on the quaternion-K\"ahler manifold $M$. Since the decomposition 
$\T \mathcal Z M = \mathcal V \oplus \mathcal H$ is preserved by $\bar\nabla$, it follows that $\bar R$ can be considered as map
$\bar R : \Lambda^2  \T \mathcal Z M \rightarrow \Lambda^2 \mathcal V \oplus \Lambda^2 \mathcal H$. 
Hence, the curvature  $\bar R$ is determined by the following three lemmas.

\begin{elem}\label{hor-curv}
Let $\{I, J, K\}$ be a local quaternionic frame defined in the neighbourhood of some $p\in M$. Then at $I_p \in \mathcal Z M$ we have\\
$$
\bar R(\tilde X, \tilde Y, \tilde Z, \tilde W) =  R^{\M}(X,Y,Z,W) \,-\, \frac{\s}{8n(n+2)} \left( g^{\M}(JX, Y) g^{\M}(JW, Z) + g^{\M}(KX, Y) g^{\M}(KW, Z) \right)
$$
for all $X, Y, Z, W \in \T_p M$.
\end{elem}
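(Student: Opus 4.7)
The plan is a direct computation starting from the definition of $\bar R$, which leverages the three identities of Lemma \ref{torsion} together with the explicit form of the torsion $\tau$ given in Proposition \ref{nk}. The horizontal–horizontal content of $\bar R$ collapses to $R^M$, and only a torsion-squared correction, carried by the vertical part of the bracket $[\tilde X,\tilde Y]$, remains to be evaluated.

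First I expand $\bar R(\tilde X,\tilde Y)\tilde Z=[\bar\nabla_{\tilde X},\bar\nabla_{\tilde Y}]\tilde Z-\bar\nabla_{[\tilde X,\tilde Y]}\tilde Z$. By Lemma \ref{torsion}(i), $\bar\nabla_{\tilde U}\tilde V=\widetilde{\nabla^M_U V}$ for horizontal lifts, so iterating this identity shows that $[\bar\nabla_{\tilde X},\bar\nabla_{\tilde Y}]\tilde Z=\widetilde{[\nabla^M_X,\nabla^M_Y]Z}$. Writing $[\tilde X,\tilde Y]=\widetilde{[X,Y]}+[\tilde X,\tilde Y]^{\mathcal V}$, the horizontal contribution to $\bar\nabla_{[\tilde X,\tilde Y]}\tilde Z$ is $\widetilde{\nabla^M_{[X,Y]}Z}$, so all horizontal terms combine to $\widetilde{R^M(X,Y)Z}$. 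The vertical residue $-\bar\nabla_{[\tilde X,\tilde Y]^{\mathcal V}}\tilde Z$ equals $2\bar\nabla_{\tau_{\tilde X}\tilde Y}\tilde Z$ by Lemma \ref{torsion}(ii). Pairing with $\tilde W$, using Lemma \ref{torsion}(iii), and invoking the total antisymmetry of $\tau$, one obtains
$$
\bar R(\tilde X,\tilde Y,\tilde Z,\tilde W)\,=\,R^M(X,Y,Z,W)\,+\,4\,g(\tau_{\tilde X}\tilde Y,\tau_{\tilde Z}\tilde W).
$$

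Second, I compute $g(\tau_{\tilde X}\tilde Y,\tau_{\tilde Z}\tilde W)$ at a point $I\in\mathcal Z M$ by expanding $\tau_{\tilde X}\tilde Y\in\mathcal V_I$ in the orthogonal basis $\{J^*,K^*\}$ of $\mathcal V_I$, whose norms are $|J^*|^2=|K^*|^2=4nc^2$ and which are mutually orthogonal since $\tr(JK)=\tr I=0$. The components are read off from Proposition \ref{nk} via $\tau(A^*,\tilde X,\tilde Y)=\tfrac14 g^M(AIX,Y)$ together with the quaternionic identities $JI=-K$ and $KI=J$. This gives
$$
g(\tau_{\tilde X}\tilde Y,\tau_{\tilde Z}\tilde W)\,=\,\frac{1}{64nc^2}\,\bigl(g^M(JX,Y)g^M(JZ,W)+g^M(KX,Y)g^M(KZ,W)\bigr),
$$
and the calibration $c^2=\frac{n+2}{2\s}$ converts the prefactor $\frac{4}{64nc^2}$ into $\frac{\s}{8n(n+2)}$. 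A final sign adjustment using the $g^M$-skew-symmetry of $J$ and $K$, namely $g^M(JZ,W)=-g^M(JW,Z)$, rewrites the expression in the form claimed in the statement.

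The main obstacle is purely notational: bookkeeping the signs from the antisymmetry of the almost complex structures and from the non-commutativity of the quaternionic frame (in particular $JI=-K$, not $K$), and correctly identifying the length of the generators of $\mathcal V_I$. The geometric content is concentrated in the first step, which isolates the torsion-squared correction to the horizontal lift of $R^M$; everything thereafter is algebra inside $\E_p\simeq\mathbb{H}$.
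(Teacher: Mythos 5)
Your proof is correct and takes essentially the same route as the paper's: both isolate the horizontal part of $\bar R$ as $\widetilde{R^M(X,Y)Z}$ via Lemma \ref{torsion}, reduce the remaining term to $4\,g(\tau_{\tilde X}\tilde Y,\tau_{\tilde Z}\tilde W)$ using $[\tilde X,\tilde Y]^{\mathcal V}=-2\tau_{\tilde X}\tilde Y$, and evaluate this by expanding in the orthogonal basis $\{J^*,K^*\}$ of $\mathcal V_I$ with $|J^*|^2=|K^*|^2=4nc^2=\tfrac{2n(n+2)}{\s}$. The sign bookkeeping ($JI=-K$, $KI=J$, and $g^M(JZ,W)=-g^M(JW,Z)$) and the final constant $\tfrac{\s}{8n(n+2)}$ all check out.
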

\proof
Using ${[\tilde X, \tilde Y]}^{\mathcal H} = \widetilde{[X, Y]}$ and the third formula of Lemma \ref{torsion}  we obtain
$$
\bar R(\tilde X, \tilde Y) \tilde Z  \,=\,  \bar \nabla_{\tilde X} \bar \nabla_{\tilde Y} \tilde Z - \bar \nabla_{\tilde Y} \bar \nabla_{\tilde X} \tilde Z - \bar \nabla_{[\tilde X, \tilde Y]}\tilde Z
\,=\, \widetilde{R^M(X, Y)Z} - \bar \nabla_{[\tilde X, \tilde Y]^{\mathcal V}} \tilde Z\ .
$$
Taking the scalar product with $\tilde W$ in the last summand and applying the first and second  equation of Lemma \ref{torsion}  gives
\bea
-g(\bar \nabla_{[\tilde X, \tilde Y]^V}\tilde Z, \tilde W  ) & =&  2 g(\bar \nabla_{\tau_{\tilde X} \tilde Y} \tilde Z, \tilde W)
 \, =\, 4 g( \tau_{\tilde Z} \tilde W, \tau_{\tilde X} \tilde Y) \\
&=&4 \sum_{i, j=1}^2  \frac{1}{| A^*_i |^2}  \, g(\tau_{\tilde Z} \tilde W, A^*_i) \, g(\tau_{\tilde X} \tilde Y, A^*_j)  \\ 
&=&   \frac{\s}{8n(n+2)}  \left( g^M(KZ, W) g^M(KX, Y) + g^M(JZ, W) g^M(JX, Y)  \right) \ .
 \eea
Here $  \{A_i^*\}$ is an orthogonal basis of $\mathcal V_I$, which we can take to be $\{J^*, K^*\}$, where $\{I, J, K\}$ is  a local quaternionic frame 
with  $|J^*|^2 = |K^*|^2 = 4nc^2 = \frac{2n(n+2)}{\s}$. The last  displayed equation follows from the explicit form of the torsion given in 
Proposition \ref{nk}.
\qed

\bigskip

Consider   $V_1, V_2 \in \mathcal V_I$  two arbitrary vertical tangent vectors. Then, writing  $V_1 = a_1 J^* + b_1 K^*$ and $V_2 = a_2 J^* + b_2 K^*$, the determinant of the family $V_1, V_2$ with respect to every oriented orthonormal basis of $ \mathcal V_I$ is given by 
\be\label{det}\det(V_1, V_2) = (a_1b_2 - a_2 b_1) \frac{2n(n+2)}{\s}\ .
\ee


\medskip

\begin{elem}\label{ver-curv}
For vertical tangent vectors  $V_1, V_2 \in \mathcal V_I$ the following holds\\
$$
\bar R (V_1, V_2, V_2, V_1) =  \frac{\s}{2n(n+2)}   \det(V_1, V_2)^2= \frac{\s}{2n(n+2)}  |V_1\wedge V_2|^2 \ .
$$
\end{elem}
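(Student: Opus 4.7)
The plan is to reduce the computation to the intrinsic geometry of the twistor fibre $\mathcal Z M_p$ through $I$, which by construction is a round $2$-sphere inside the Euclidean space $(\E_p, g^\E_c)$. The first step is to show that the torsion of $\bar\nabla$ contributes nothing when both arguments are vertical. Using the identity \eqref{rb}, namely $\bar R = R^g - \tau^2$ with $\tau^2_{X,Y} = [\tau_X,\tau_Y] - 2\tau_{\tau_X Y}$, together with the fact proven in Proposition \ref{nk} that $\tau$ is a section of $\Lambda^2 \mathcal H \otimes \mathcal V$, the total antisymmetry of $\tau$ as a $3$-form forces $\tau(V_1, V_2, \cdot) \equiv 0$, so that $\tau_{V_1} V_2 = 0$ and $\tau_{V_2} V_2 = 0$. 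A direct substitution then gives $\tau^2_{V_1, V_2} V_2 = 0$, and hence $\bar R(V_1, V_2, V_2, V_1) = R^g(V_1, V_2, V_2, V_1)$.

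Next I would invoke O'Neill's formula for the Riemannian submersion $\pi\colon \mathcal Z M \to M$: since its fibres are totally geodesic, the O'Neill T-tensor vanishes and, on four vertical inputs, $R^g$ coincides with the intrinsic curvature of the fibre $\mathcal Z M_p$. This fibre is the round sphere of squared radius $4nc^2 = \tfrac{2n(n+2)}{\s}$ inside $(\E_p, g^\E_c)$, hence it has constant sectional curvature $\tfrac{1}{4nc^2} = \tfrac{\s}{2n(n+2)}$. Combining these two facts yields
$$
\bar R(V_1, V_2, V_2, V_1) \;=\; \frac{\s}{2n(n+2)}\,|V_1 \wedge V_2|^2.
$$

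To finish, I would identify $|V_1 \wedge V_2|^2$ with $\det(V_1, V_2)^2$. Writing $V_j = a_j J^* + b_j K^*$ in the orthogonal basis $\{J^*, K^*\}$ of $\mathcal V_I$ with $|J^*|^2 = |K^*|^2 = \tfrac{2n(n+2)}{\s}$, the Cauchy--Lagrange identity gives
$$
|V_1|^2 |V_2|^2 - g(V_1, V_2)^2 \;=\; (a_1 b_2 - a_2 b_1)^2 \,\bigl(\tfrac{2n(n+2)}{\s}\bigr)^{2},
$$
which matches $\det(V_1, V_2)^2$ by the formula \eqref{det}. The only mildly delicate point is the first step, where one must unpack what ``$\tau \in \Lambda^2 \mathcal H \otimes \mathcal V$'' means in terms of the associated $3$-form, in order to conclude that $\tau$ vanishes whenever two of its three slots are filled with vertical vectors; once this is done the remainder of the argument is a clean application of O'Neill's formula together with the standard curvature of a round sphere.
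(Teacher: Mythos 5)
Your proposal is correct and follows essentially the same route as the paper: reduce $\bar R$ to $R^g$ on vertical vectors using the vanishing of $\tau$ on two vertical arguments (which you justify in more detail via \eqref{rb}), then invoke the totally geodesic fibres and their constant curvature $\tfrac{1}{4nc^2}=\tfrac{\s}{2n(n+2)}$. The final identification of $|V_1\wedge V_2|^2$ with $\det(V_1,V_2)^2$ via the Lagrange identity is a correct verification of what the paper states without comment.
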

\proof
Since $\tau_{V_1}V_2 = 0$ for vertical vectors $V_1, V_2$ we have  $\bar R (V_1, V_2, V_2, V_1) = R (V_1, V_2, V_2, V_1) $ (here $R$ denotes the Riemannian curvature tensor of $g$ on $\mathcal ZM$). Thus the statement
of the lemma follows from the fact that the fibres of $\mathcal Z M$ are totally geodesic and have sectional curvature 
$\frac{1}{4nc^2} = \frac{\s}{2n(n+2)} $ in the nearly K\"ahler case.
\qed

\bigskip

\begin{elem}\label{mix-curv}
For vertical tangent vectors $V_1,V_2 \in \mathcal V_I$ and tangent vectors $X_1, X_2 \in \T_{\pi(I)}M$ we have
$$
\bar R(\tilde X_1, \tilde X_2, V_2, V_1) = - \frac{\s}{4n(n+2)}\, g^M(IX_2, X_1)   \det(V_1, V_2) \ .
$$
\end{elem}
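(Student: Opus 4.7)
The plan is to apply the first Bianchi identity for $\bar R$ from Proposition \ref{nkcurv} to the cyclic triple $(V_1, V_2, \tilde X_1)$ with fourth argument $\tilde X_2$, and to exploit the fact, noted at the start of this subsection, that the decomposition $\T \mathcal Z M = \mathcal H \oplus \mathcal V$ is preserved by $\bar\nabla$. It will follow immediately that the curvature endomorphism $\bar R(W_1, W_2)$ also preserves both distributions, which will kill the two ``mixed'' terms appearing in the cyclic sum: $\bar R(V_2, \tilde X_1) V_1$ is vertical, so its scalar product with the horizontal vector $\tilde X_2$ vanishes, and similarly for $\bar R(\tilde X_1, V_1) V_2$. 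On the torsion side, $\tau_{V_1} V_2 = 0$ since $\tau \in \Lambda^2 \mathcal H \otimes \mathcal V$ annihilates any pair of vertical arguments.

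The Bianchi identity will thus collapse to
$$\bar R(V_1, V_2, \tilde X_1, \tilde X_2) = 4 \bigl[ g(\tau_{V_2}\tilde X_1, \tau_{V_1}\tilde X_2) + g(\tau_{\tilde X_1}V_1, \tau_{V_2}\tilde X_2) \bigr].$$
Writing $V_i = A_i^*$ with $A_i = a_i J + b_i K$ in a local quaternionic frame $\{I, J, K\}$, Proposition \ref{nk} gives $\tau_V \tilde X = \tfrac14 \widetilde{AIX}$, so each scalar product becomes a $\pm \tfrac{1}{16}$ multiple of $g^M(A_i I X_k, A_j I X_l)$. A direct computation using the quaternion relations $IJ = K$, $JK = I$, $KI = J$ establishes
$$g^M(A_i I X_k, A_j I X_l) = (a_i a_j + b_i b_j)\, g^M(X_k, X_l) + (a_i b_j - a_j b_i)\, g^M(IX_k, X_l).$$
Upon summing the two surviving scalar products in the Bianchi identity, the symmetric $g^M(X, Y)$ contributions cancel and the antisymmetric ones combine to give a multiple of $(a_1 b_2 - a_2 b_1)\, g^M(IX_1, X_2)$.

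Converting $a_1 b_2 - a_2 b_1 = \tfrac{\s}{2n(n+2)} \det(V_1, V_2)$ via formula \eqref{det}, and using $g^M(IX_1, X_2) = -g^M(IX_2, X_1)$, the outcome will be
$$\bar R(V_1, V_2, \tilde X_1, \tilde X_2) = \frac{\s}{4n(n+2)}\, g^M(IX_2, X_1)\, \det(V_1, V_2).$$
The stated formula then follows from the pair symmetry of $\bar R$ combined with antisymmetry in the first two arguments, since $\bar R(\tilde X_1, \tilde X_2, V_2, V_1) = \bar R(V_2, V_1, \tilde X_1, \tilde X_2) = -\bar R(V_1, V_2, \tilde X_1, \tilde X_2)$. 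The conceptual obstacle is essentially nil once the vanishing of the two mixed components is observed via the preservation of $\mathcal H \oplus \mathcal V$ by $\bar\nabla$; the remaining work is the routine bookkeeping of the quadratic torsion expression in the chosen quaternionic frame.
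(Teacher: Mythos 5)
Your proposal is correct and follows essentially the same route as the paper: both apply the first Bianchi identity with torsion, kill the two mixed curvature terms because $\bar\nabla$ preserves $\mathcal H\oplus\mathcal V$, drop the torsion term with two vertical arguments, and evaluate the two surviving torsion products from the explicit formula for $\tau$ in Proposition \ref{nk}. The only (cosmetic) differences are the choice of cyclic triple and that the paper organizes the final computation as the commutator $[\tau_{V_1},\tau_{V_2}]=\tfrac18\det(V_1,V_2)\tfrac{\s}{2n(n+2)}\tilde I$, whereas you expand the quaternionic frame coefficients directly; the results agree.
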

\proof
We apply the first Bianchi identity for connections with parallel and skew-symmetric torsion (see \cite[Cor. 2.3]{CMS}).
Then two of the curvature terms vanish since $\bar \nabla$ preserves the splitting $\T \mathcal Z M = \mathcal H \oplus \mathcal V$ and
in the cyclic sum over the torsion terms one summand vanishes as well because the torsion is zero when applied to two vertical vectors. It follows 
\bea 
\bar R(\tilde X_1, \tilde X_2, V_2, V_1)  &=& 
4 \big( g( \tau_{\tilde X_2} V_2,  \tau_{\tilde X_1} V_1) \,+\, g(\tau_{V_2} \tilde X_1, \tau_{\tilde X_2} V_1  \big) \\[.5ex]
&=& 
4 \big( -  g(  \tau_{V_1} \,  \tau_{V_2} \tilde X_2, \tilde X_1 ) \, +\,    g( \tilde X_1,  \tau_{V_2} \, \tau_{V_1} \tilde X_2 )  \big)\\[.5ex]
&=& 
-4 \, g( [  \tau_{V_1},  \tau_{V_2}  ]  \tilde X_2, \tilde X_1 )\ .
\eea
It remains to compute the commutator $ [\tau_{V_1},  \tau_{V_2}  ] $. From the explicit form of the torsion, given in Proposition \ref{nk}, we see that
$\tau_{J^*} = \frac14 \tilde K$ and $\tau_{K^*} = - \frac14 \tilde J$, where $\tilde J$ and $\tilde K$ denote the natural lifts of $J$ and $K$ to ${\mathcal H}_I$. 
Hence, writing again $V_1 = a_1 J^* + b_1 K^*$ and $V_2 = a_2 J^* + b_2 K^*$, we obtain
$$
[\tau_{V_1},  \tau_{V_2}  ]  =  \frac18 a_1 b_2 \tilde I  -   \frac18 b_1 a_2  \tilde I = \frac18 (a_1b_2 - b_1a_2) \tilde I  = \frac18 \det(V_1, V_2) \,  \frac{\s}{2n(n+2)} \, \tilde I \ .
$$
The statement now follows from the fact that $g(\tilde I\tilde X_2,\tilde X_1)=g^M(IX_2,X_1)$. 
\qed

\medskip

The explicit curvature formulae above can be used to compute the Ricci curvature of $\bar R$, thus verifying directly that
$\overline{\Ric}$ is $\bar\nabla$-parallel:

\begin{epr}\label{p-ric}
Let $X$ be a horizontal tangent vector of $\mathcal Z M$ and let $V$ be a vertical tangent vector. Then the $\bar\nabla$-Ricci curvature $\overline{\Ric}$ is completely
described  by $\overline{\Ric} (X,V) = 0$ and
$$
\overline{\Ric}(X, X) \, = \,  \frac{(n+1)\,  \s }{ 4n(n+2)} \, |X|_g^2
\qquad \mbox{and} \qquad
\overline{\Ric} (V, V) \, = \,  \frac{ \s }{ 2n(n+2)} \, |V|_g^2 \ .
$$
In particular $\overline{\Ric}$ is  $\bar\nabla$-parallel.
\end{epr}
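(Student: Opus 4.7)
The plan is to compute the three blocks of $\overline{\Ric}$ separately at an arbitrary point $I \in \mathcal{Z}M$ by writing the Ricci trace in an orthonormal frame $\{\tilde f_j\}_{j=1}^{4n} \cup \{V_1, V_2\}$ of $\T_I\mathcal{Z}M$ adapted to the splitting $\mathcal H_I \oplus \mathcal V_I$ (with $\{f_j\}$ a $g^M$-orthonormal basis of $\T_{\pi(I)}M$), and then to deduce the $\bar\nabla$-parallelism from the form of the answer.

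The key preliminary observation I would establish is that $\bar R$ has vanishing sectional curvature on every mixed horizontal-vertical $2$-plane, i.e.\ $\bar R(\tilde X, V, V, \tilde X) = 0$ for all $\tilde X \in \mathcal H_I$ and $V \in \mathcal V_I$. I would derive this from the decomposition $\bar R = R^g - \tau^2$ of \eqref{rb}. On the one hand, a short calculation from $\tau^2_{X,Y} = [\tau_X, \tau_Y] - 2\tau_{\tau_X Y}$ together with the total antisymmetry of $\tau$ produces the algebraic identity $g(\tau^2_{X, Y}Y, X) = |\tau_X Y|^2$. On the other hand, since $\pi\colon\mathcal{Z}M\to M$ is a Riemannian submersion with totally geodesic fibres whose O'Neill tensor equals $-\tau$ by Lemma \ref{torsion}, the O'Neill formula for mixed sectional curvatures gives $R^g(\tilde X, V, V, \tilde X) = |\tau_{\tilde X}V|^2$. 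The two contributions cancel.

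With this in hand, the computation of $\overline{\Ric}$ is essentially mechanical. For the horizontal block the vertical sum drops out and Lemma \ref{hor-curv} yields
\[
\overline{\Ric}(\tilde X, \tilde X) = \Ric^M(X, X) - \frac{\s}{8n(n+2)}\bigl(|JX|^2 + |KX|^2\bigr) = \frac{\s}{4n}|X|^2 - \frac{\s}{4n(n+2)}|X|^2 = \frac{(n+1)\s}{4n(n+2)}|X|^2,
\]
using that $M$ is Einstein with Einstein constant $\s/(4n)$ and that $\sum_j g^M(JX, f_j)^2 = |JX|^2 = |X|^2$. For the vertical block, pair symmetry reduces the horizontal contribution $\sum_j \bar R(V, \tilde f_j, \tilde f_j, V) = \sum_j \bar R(\tilde f_j, V, V, \tilde f_j)$ to the previous vanishing case, while Lemma \ref{ver-curv} combined with $\dim\mathcal V = 2$ and orthonormality gives $\sum_k\bar R(V, V_k, V_k, V) = \frac{\s}{2n(n+2)}\sum_k|V\wedge V_k|^2 = \frac{\s}{2n(n+2)}|V|^2$. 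Finally, the mixed block $\overline{\Ric}(\tilde X, V) = 0$ is formal: since (as recalled at the beginning of \S3.2) $\bar R$ takes values in $\Lambda^2\mathcal H\oplus\Lambda^2\mathcal V$, the endomorphisms $\bar R(\tilde X, \tilde f_j)$ and $\bar R(V_k, V)$ preserve the horizontal-vertical splitting, so that $\bar R(\tilde X, \tilde f_j, \tilde f_j, V) = 0$ and, by pair symmetry, $\bar R(\tilde X, V_k, V_k, V) = \bar R(V_k, V, \tilde X, V_k) = 0$.

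For the $\bar\nabla$-parallelism, it then suffices to observe that the above shows $\overline{\Ric}$ to be $\frac{(n+1)\s}{4n(n+2)}$ times $g$ on $\mathcal H$, $\frac{\s}{2n(n+2)}$ times $g$ on $\mathcal V$, and zero on $\mathcal H\times\mathcal V$; since the subbundles $\mathcal H$ and $\mathcal V$ and the metric $g$ are all $\bar\nabla$-parallel, so is $\overline{\Ric}$. The main obstacle will be the preliminary identity $\bar R(\tilde X, V, V, \tilde X) = 0$, which relies on the neat compatibility between the O'Neill tensor of the twistor fibration and the torsion of the nearly Kähler canonical connection.
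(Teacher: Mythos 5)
Your proof is correct and follows essentially the same route as the paper: trace $\bar R$ in a frame adapted to $\T\mathcal Z M=\mathcal H\oplus\mathcal V$, feed in Lemmas \ref{hor-curv} and \ref{ver-curv}, kill the mixed block using the pair symmetry and the fact that $\bar\nabla$ preserves the splitting, and read off parallelism from the block-diagonal form. The only place you genuinely diverge is the ``key preliminary observation'' $\bar R(\tilde X,V,V,\tilde X)=0$, which you obtain by combining the identity $g(\tau^2_{\tilde X,V}V,\tilde X)=|\tau_{\tilde X}V|^2$ with O'Neill's formula for totally geodesic fibres; this is a valid (and rather pretty) consistency check, but it is far more than is needed: since $\bar\nabla$ preserves $\mathcal H$ and $\mathcal V$, the operator $\bar R_{\tilde X,V}$ does too, so $\bar R_{\tilde X,V}V$ is vertical and hence orthogonal to $\tilde X$ --- exactly the same one-line argument you already invoke for the vanishing of $\overline{\Ric}(X,V)$, and the reason the paper drops these terms without comment. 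So what you identify as the main obstacle is in fact the most trivial step.
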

\proof
For arbitrary tangent vectors $A, B \in \T \mathcal Z M$ the $\bar \nabla$-Ricci curvature  $\overline{\Ric}$ is defined as 
$\overline{\Ric}(A, B)  = \sum_i\bar R(e_i, A, B, e_i)$, where $\{e_i \},  i=1, \ldots, 4n+2,$ is a local orthonormal frame of $\mathcal Z M$,
which can be assumed to be adapted to the decomposition $\T \mathcal Z M = \mathcal V \oplus \mathcal H$. Let now $X\in \mathcal H$ be a horizontal tangent vector and let $V\in \mathcal V$ be a vertical tangent vector. Then, since $\bar \nabla$ preserves
this splitting, we immediately have $\overline{\Ric}(X, V) = 0$. Recall that the Ricci tensor of a connection with parallel skew-symmetric
torsion is symmetric. Hence, it remains to compute $\overline{\Ric}(V, V)$  and $\overline{\Ric}(X, X)$. First, we obtain from Lemma \ref{ver-curv}
\bea
\overline{\Ric}(V, V)  &=&\sum_{i=1}^{4n+2} \bar R (e_i, V, V, e_i) \,=\, \sum_{i=1}^2\frac{\s}{2n(n+2)} |V_i \wedge V |_g^2  \,=\, \frac{\s}{2n(n+2)} |V|_g^2\ ,
\eea
where $\{V_i\}, i=1, 2,$ can be taken to be an orthonormal basis of the vertical tangent space $\mathcal V_I$. Similarly, but this time using a local 
orthonormal basis $\{X_i\}, i=1, \ldots, 4n$,  of the horizontal tangent space $\mathcal H_I$,   the curvature formula of Lemma \ref{hor-curv} leads to
\bea
\overline{\Ric}(X, X)  &=& \sum_{i=1}^{4n+2} \bar R (e_i, X, X, e_i) \,=\, \sum_{i=1}^{4n}\bar R (X_i, X, X, X_i) \\
&=& \Ric^M(\pi_*(X), \pi_*(X))\\
&& \,- \,  \sum_{i=1}^{4n}\frac{\s}{8n(n+2)}  \big(g^M(J\pi_*(X_i), \pi_*(X))^2  + g^M(K\pi_*(X_i), \pi_*(X))^2  \big)\\
&=& \frac{\s}{4n}g^M (\pi_*(X), \pi_*(X))- \frac{\s}{4n(n+2)}g^M (\pi_*(X), \pi_*(X)) \\
&=&\frac{(n+1)\, \s}{4n(n+2)}  g^M (\pi_*(X), \pi_*(X))\ .
\eea
Since $\pi: \mathcal Z M \rightarrow M$ is a Riemannian submersion, we have $g^M (\pi_*(X), \pi_*(X))=g(X, X)$, thus finishing the proof.
\qed

\medskip

Combining the curvature expressions of the preceding three lemmas we obtain a formula for the sectional curvature of $\bar R$. Note that any two tangent vectors in $\T_I\mathcal Z M$ can be written as $\tilde X_1 + V_1, \tilde X_2 + V_2$ for 
$V_1, V_2 \in \mathcal V_I$ and $X_1, X_2 \in \T_{\pi(I)}M$.

\begin{epr}\label{sec-twist} 
Let $V_1,V_2$ be vertical vectors at some $I\in \mathcal Z M$, and let $X_1,\ X_2$ be tangent vectors to $M$ at the point $\pi(I)$. Then
\bea
&& \bar R(\tilde X_1 + V_1, \tilde X_2 + V_2, \tilde X_2 + V_2, \tilde X_1 + V_1) \qquad \\
&& 
\qquad
=\,
R^M(X_1, X_2, X_2, X_1) \, -\,   \frac{\s}{8n(n+2)} \sum_{\alpha = 1}^3 g^M(J_\alpha X_1, X_2)^2\\
&&
\qquad\qquad\qquad\qquad\qquad\qquad\qquad
\, + \; \frac{\s}{8n(n+2)} \left( g^M({IX_2},  X_1)  - 2  \det(V_1, V_2) \right)^2\ .
\eea
\end{epr}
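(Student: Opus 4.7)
The strategy is to expand the four-linear form $\bar R(\tilde X_1 + V_1, \tilde X_2 + V_2, \tilde X_2 + V_2, \tilde X_1 + V_1)$ by multilinearity, discard terms that vanish for type reasons, substitute the three preceding lemmas, and finally complete a square so that the answer acquires the symmetric form displayed on the right-hand side.

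First I would use the structural observation already recorded in the paper: since $\bar\nabla$ preserves the splitting $\T\mathcal Z M=\mathcal H\oplus\mathcal V$, the self-adjoint operator $\bar R$ on $\Lambda^2\T\mathcal Z M$ sends $\Lambda^2\T\mathcal Z M$ into $\Lambda^2\mathcal H\oplus\Lambda^2\mathcal V$, and by self-adjointness it vanishes on the orthogonal complement $\mathcal H\wedge\mathcal V$. Consequently $\bar R(A,B,C,D)=0$ whenever the pair $(A,B)$ or the pair $(C,D)$ is of mixed horizontal/vertical type. Expanding
$$(\tilde X_1+V_1)\wedge(\tilde X_2+V_2)=\tilde X_1\wedge\tilde X_2+\tilde X_1\wedge V_2+V_1\wedge\tilde X_2+V_1\wedge V_2,$$
only the two pure summands $\tilde X_1\wedge\tilde X_2$ and $V_1\wedge V_2$ survive when evaluated against themselves or against each other, and the pair symmetry of $\bar R$ collapses the two cross terms into a factor of $2$. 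This leaves exactly three contributions:
$$\bar R(\tilde X_1,\tilde X_2,\tilde X_2,\tilde X_1)+2\bar R(\tilde X_1,\tilde X_2,V_2,V_1)+\bar R(V_1,V_2,V_2,V_1).$$

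Next I would substitute the three formulae. Lemma~\ref{hor-curv} (with $(X,Y,Z,W)=(X_1,X_2,X_2,X_1)$) gives the horizontal piece; Lemma~\ref{mix-curv} gives $2\bar R(\tilde X_1,\tilde X_2,V_2,V_1)=-\tfrac{\s}{2n(n+2)}g^M(IX_2,X_1)\det(V_1,V_2)$; and Lemma~\ref{ver-curv} gives $\bar R(V_1,V_2,V_2,V_1)=\tfrac{\s}{2n(n+2)}\det(V_1,V_2)^2$. At this stage the answer reads
$$R^M(X_1,X_2,X_2,X_1)-\frac{\s}{8n(n+2)}\bigl(g^M(JX_1,X_2)^2+g^M(KX_1,X_2)^2\bigr)+\frac{\s}{2n(n+2)}\Bigl(\det(V_1,V_2)^2-g^M(IX_2,X_1)\det(V_1,V_2)\Bigr).$$

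Finally I would complete the square on the last bracket. Adding and subtracting $\tfrac{\s}{8n(n+2)}g^M(IX_2,X_1)^2$ turns the mixed $(X,V)$ contribution into
$$\frac{\s}{8n(n+2)}\bigl(g^M(IX_2,X_1)-2\det(V_1,V_2)\bigr)^2-\frac{\s}{8n(n+2)}g^M(IX_2,X_1)^2,$$
and the subtracted term amalgamates with the $J$- and $K$-terms into $-\tfrac{\s}{8n(n+2)}\sum_{\alpha=1}^{3}g^M(J_\alpha X_1,X_2)^2$ (using $g^M(IX_2,X_1)^2=g^M(IX_1,X_2)^2$, since $I$ is skew). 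This produces precisely the stated identity. There is no real obstacle: once the three lemmas of the subsection are in place, the only step requiring a small idea is the completion of the square, which is what groups the three $J_\alpha$-terms symmetrically and isolates the $(g^M(IX_2,X_1)-2\det(V_1,V_2))^2$ contribution.
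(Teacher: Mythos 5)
Your proposal is correct and follows essentially the same route as the paper: the same type argument (preservation of the splitting $\T\mathcal Z M=\mathcal H\oplus\mathcal V$ plus pair symmetry) reduces the expansion to the three pure terms, the same three lemmas are substituted, and your completion of the square is just the paper's ``tautological formula'' $g^M(JX,Y)^2+g^M(KX,Y)^2=\sum_{\alpha}g^M(J_\alpha X,Y)^2-g^M(IX,Y)^2$ in disguise. No gaps.
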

\proof
Using the fact that $\bar \nabla$ preserves the splitting $\T \mathcal Z M = \mathcal H \oplus \mathcal V$, and the pair symmetry of $\bar R$ we obtain
\bea 
\bar R(\tilde X_1 + V_1, \tilde X_2 + V_2, \tilde X_2 + V_2, \tilde X_1 + V_1) &=&
\bar R(\tilde X_1, \tilde X_2, \tilde X_2 , \tilde X_1)  \;+ \; \bar R( V_1, V_2, V_2,  V_1) \\[.5ex]
&&   \qquad   \qquad  \qquad  \qquad  \qquad  +  \; 2 \bar R(\tilde X_1, \tilde X_2 ,  V_2, V_1)  \ .
\eea
If we denote as before the local quaternionic frame $\{I, J, K\}$ by $\{J_\a\}$, $\a=1,2,3$, then substituting
the curvature expressions of Lemmas \ref{hor-curv}, \ref{ver-curv} and \ref{mix-curv} and using the tautological formula 
$$ g^M(JX, Y)^2 + g^M(KX, Y)^2 =\sum_{\alpha = 1}^3 g^M(J_\alpha X, Y)^2- g^M(IX, Y)^2$$
yields the result.
%
%
\qed

\section{Sectional curvature of quaternion-K\"ahler manifolds}

In this section we will derive our main result as a consequence of  Proposition  \ref{sec-twist}.   For doing this we first have to introduce the notions of  quaternionic sectional curvature, quaternionic lines, quaternionic planes and totally real planes.

\subsection{Quaternionic sectional curvature}

Let $(M,g, \E)$ be a quaternion-K\"ahler manifold. Then any tangent vector $X \in \T_pM $ spans a well-defined $4$-dimensional subspace 
$L(X) := \mathrm{span} \{X, IX, JX, KX\} \subset \T_pM$, where $\{I, J, K\}$ is a local quaternionic frame. The space $L(X)$ is called the
{\it quaternionic line} spanned by $X$. We also introduce the $3$-dimensional subspace 
$Q(X) := \mathrm{span} \{IX, JX, KX\} \subset \T_pM$, which is in some sense the imaginary part of $L(X)$.
For non-collinear vectors  $X$ and $Y$, the 2-plane $\mathrm{span}\{X, Y\}$ is called a {\it quaternionic plane} if $Y\in L(X)$ and a {\it totally real plane} if $Y$ is orthogonal to $Q(X)$.

\begin{ede}
Let $(M,g, \E)$ be a quaternion-K\"ahler manifold of dimension $4n\ge 8$. For every $p\in M$ and non-collinear tangent vectors $X,Y\in \T_pM$, the {\em quaternionic sectional curvature} $\kappa_\H(X,Y)$ defined by
\begin{equation}\label{def}
\kappa_\H(X, Y) \, := \,  \kappa(X,Y) \, - \,  \frac{\s}{8n(n+2)} \frac{|X|_g^2\,|\pr_{Q(X)}(Y)|_g^2}{|X\wedge Y|_g^2} \ ,
\end{equation}
where $Q(X)$ is the imaginary part of the quaternionic line generated by $X$ and $\kappa(X,Y)$ is the sectional curvature of the plane spanned by $X, Y$.
\end{ede}

Note that the quaternionic sectional curvature  $\kappa_\H$  coincides with the Riemannian
sectional curvature $\kappa$ on totally real planes, whereas on quaternionic planes, $\kappa_\H$ is equal to $\kappa-\frac{s}{8n(n+2)} $. More  generally, for every plane $P$ and every basis $X,Y$ of $P$, the quantity $\frac{|X|_g^2\,|\pr_{Q(X)}(Y)|_g^2}{|X\wedge Y|_g^2}$ is equal to $\cos^2(\theta_P)$, where $\theta_P$ is the angle between $P$ and the quaternionic line spanned by any of its non-zero vectors, whence
\begin{equation}\label{wirtinger}
\kappa_\H (P) \,  = \,  \kappa(P) \, - \, \frac{\s}{8n(n+2)} \cos^2(\theta_P)\ , 
\end{equation}
for every 2-plane $P$. The angle $\theta_P$ is analogous to the Wirtinger angle in complex geometry.

\bigskip

We can now formulate the main result of our article.

\begin{ath}\label{theorem2}
Let $(M^{4n}, g, \E)$ be a compact positive quaternion-K\"ahler manifold. If the quaternionic sectional curvature
$\kappa_\H$ is non-negative then $M$ is symmetric, i.e. $M$ is isometric to a Wolf space.
\end{ath}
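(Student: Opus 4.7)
The plan is to reduce Theorem \ref{theorem2} to Theorem \ref{nk-hom} applied to the nearly K\"ahler twistor space $(\mathcal Z M, g, J)$ constructed in Section 3. First I would verify that $\mathcal Z M$ meets the hypotheses of Theorem \ref{nk-hom}: compactness is inherited from $M$, simple-connectedness follows from Salamon's theorem that compact positive quaternion-K\"ahler manifolds are simply connected together with the fact that the twistor fibres $S^{2}$ are simply connected, and strictness of the nearly K\"ahler structure is supplied by Proposition \ref{nk}. It then remains to show, under the hypothesis $\kappa_{\H}\ge 0$, that the sectional curvature of $\bar R$ is non-negative; this is precisely the input needed to conclude that $\mathcal Z M$ is a naturally reductive homogeneous nearly K\"ahler manifold.

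The heart of the argument is to rearrange the formula of Proposition \ref{sec-twist}. Given a local orthonormal quaternionic frame $\{I,J,K\}$ at $p=\pi(I)$ and vectors $X_1,X_2\in\T_pM$, the elementary identity
$$
\sum_{\alpha=1}^3 g^M(J_\alpha X_1, X_2)^2 \;=\; |X_1|_g^2\, |\pr_{Q(X_1)}(X_2)|_g^2
$$
combined with definition \eqref{def} of $\kappa_{\H}$ allows the first two terms of Proposition \ref{sec-twist} to be rewritten as $|X_1\wedge X_2|_g^2\,\kappa_{\H}(X_1,X_2)$, whenever $X_1,X_2$ are linearly independent. The third term of Proposition \ref{sec-twist} is a non-negative multiple of a square since $\s>0$. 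Thus, for any $\xi_j=\tilde X_j+V_j\in\T_I\mathcal Z M$ with $X_1,X_2$ linearly independent,
$$
\bar R(\xi_1,\xi_2,\xi_2,\xi_1) \;=\; |X_1\wedge X_2|_g^2\,\kappa_{\H}(X_1,X_2) \;+\; \frac{\s}{8n(n+2)}\bigl(g^M(IX_2,X_1)-2\det(V_1,V_2)\bigr)^2 \;\ge\; 0.
$$
I would then dispose of the degenerate cases where $X_1,X_2$ are collinear (including the purely vertical case $X_1=X_2=0$): in those situations $R^M(X_1,X_2,X_2,X_1)=0$, $\sum_{\alpha} g^M(J_\alpha X_1,X_2)^2=0$ by skew-symmetry of the $J_\alpha$, and $g^M(IX_2,X_1)=0$, so $\bar R(\xi_1,\xi_2,\xi_2,\xi_1)$ reduces to $\frac{\s}{2n(n+2)}\det(V_1,V_2)^2\ge 0$, in agreement with Lemma \ref{ver-curv}.

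Having established that $\bar R$ has non-negative sectional curvature, Theorem \ref{nk-hom} yields that $(\mathcal Z M,g,J)$ is naturally reductive homogeneous. The last step is to descend homogeneity from $\mathcal Z M$ to $M$. Any isometry preserving the nearly K\"ahler structure automatically preserves the $\bar\nabla$-parallel torsion $\tau$, which by Proposition \ref{nk} lies in $\Lambda^2\mathcal H\otimes\mathcal V$ and whose kernel as a map $\T\mathcal Z M\to\Lambda^2\T\mathcal Z M$ intrinsically recovers the vertical distribution $\mathcal V$. Such isometries therefore preserve the twistor fibration and project to isometries of $(M,g,\E)$, i.e.\ to quaternion-K\"ahler isometries of $M$; transitivity on $\mathcal Z M$ descends to transitivity on $M$. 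Hence $M$ is a compact homogeneous positive quaternion-K\"ahler manifold, which by Alekseevsky's classification must be a Wolf space.

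The main obstacle is the curvature rearrangement: the non-obvious feature is that the ``twisting'' quadratic piece involving $g^M(IX_2,X_1)$ in Proposition \ref{sec-twist} is already organised as a perfect square, so it pairs cleanly with the remainder once the latter is identified with $\kappa_{\H}$; the structural steps (applying Theorem \ref{nk-hom} and descending to $M$) are then essentially automatic.
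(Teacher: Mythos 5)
Your first two steps coincide with the paper's proof: the identity $\sum_{\alpha}g^M(J_\alpha X_1,X_2)^2=|X_1|_g^2|\pr_{Q(X_1)}(X_2)|_g^2$ turns the first two terms of Proposition \ref{sec-twist} into $|X_1\wedge X_2|_g^2\,\kappa_{\mathcal H}(X_1,X_2)$, the last term is manifestly a non-negative square, and Theorem \ref{nk-hom} then gives $\bar\nabla\bar R=0$ on the twistor space. (You even treat the degenerate collinear case, which the paper leaves implicit.) Where you diverge is the final descent to $M$, and there your argument has a concrete error: you claim that the vertical distribution $\mathcal V$ is recovered as the kernel of $\xi\mapsto\tau_\xi$, but Proposition \ref{nk} shows precisely that this kernel is trivial --- that is the \emph{strictness} of the nearly K\"ahler structure, which you yourself invoke two sentences earlier. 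Knowing only that $\tau$ is a section of $\Lambda^2\mathcal H\otimes\mathcal V$ does not by itself make $\mathcal V$ intrinsic, since $\tau_{A^*}$ acts nontrivially on $\mathcal H$. The step is repairable: for instance, by Proposition \ref{p-ric} the $\bar\nabla$-parallel tensor $\overline{\Ric}$ has the two distinct eigenvalues $\tfrac{(n+1)\s}{4n(n+2)}$ on $\mathcal H$ and $\tfrac{\s}{2n(n+2)}$ on $\mathcal V$ (distinct for $n\ge2$), so $\mathcal V$ is an eigendistribution of a tensor canonically attached to $(g,J)$ and is therefore preserved by the transvection group. You would also need to justify that the transitive group produced by Ambrose--Singer preserves $J$ (it does, being generated by $\bar\nabla$-parallel transports, and $J$ is $\bar\nabla$-parallel), and you are silently using Alekseevsky's classification of compact homogeneous positive quaternion-K\"ahler manifolds, a substantial external result not in the paper's toolkit.

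The paper avoids all of this by never descending homogeneity at all: it shows directly that $\bar\nabla\bar R=0$ is equivalent to $\nabla^M R^M=0$. The $\Lambda^2\mathcal V$-valued part of $\bar R$ is automatically $\bar\nabla$-parallel by a general fact about connections with parallel skew-symmetric torsion, and for the purely horizontal part one writes, as in the proof of Lemma \ref{hor-curv}, $\bar R(\tilde X,\tilde Y,\tilde Z,\tilde W)=R^M(X,Y,Z,W)\circ\pi+4g(\tau_{\tilde Z}\tilde W,\tau_{\tilde X}\tilde Y)$; since $\tau$ is $\bar\nabla$-parallel and horizontal lifts are $\bar\nabla$-parallel along horizontal directions (Lemma \ref{torsion}), differentiating gives $(\bar\nabla_{\tilde U}\bar R)(\tilde X,\tilde Y,\tilde Z,\tilde W)=(\nabla^M_U R^M)(X,Y,Z,W)\circ\pi$, and vertical derivatives vanish. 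Hence $M$ is locally symmetric, and being compact, simply connected and positive quaternion-K\"ahler, it is a Wolf space. You should either adopt this more self-contained route or fix the characterization of $\mathcal V$ and explicitly cite the homogeneous classification.
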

\proof
We start by showing that the quaternionic sectional curvature  $\kappa_\H $ is non-negative if and only if 
on the twistor space $\mathcal Z M$ the sectional curvature of $\bar R$ is non-negative.

Assume that the quaternionic sectional curvature $\kappa_\H$ is non-negative. Then the curvature expression 
in Proposition \ref{sec-twist}, the definition of the  quaternionic sectional curve in \eqref{def} and the formula
$
\sum_{\alpha = 1}^3 g^M(J_\alpha X, Y)^2 = |X|_g^2\,|\pr_{Q(X)}(Y)|_g^2 
$
 show directly that that the  sectional curvature of $\bar R$ is non-negative.
Conversely, if the sectional curvature of $\bar R$ is non-negative, then for any $I\in \mathcal Z M$ and tangent vectors
$X, Y \in \T_{\pi(I)} M$, we can choose vertical vectors $V_1, V_2\in\mathcal V_I$ with $ \det(V_1, V_2)=\frac12g^M({IX_2},  X_1)$. For such 
a choice of $V_1,V_2$, the last summand in the curvature formula of Proposition \ref{sec-twist}  vanishes and the non-negativity of
$\kappa_\H $  follows.

Hence, if the quaternionic K\"ahler sectional curvature $\kappa_\H$  is non-negative, the twistor space $\mathcal Z M$  is homogeneous 
with $\bar \nabla \bar R = 0$  as a consequence of Theorem \ref{nk-hom}.

Moreover, Proposition \ref{sec-twist} also shows that $\bar \nabla \bar R = 0$ if and only if $\nabla^M R^M = 0$. 
In order to see this we first remark that the composition of $\bar R : \Lambda^2  \T \mathcal Z M \rightarrow \Lambda^2 \mathcal V \oplus \Lambda^2 \mathcal H$
with the projection onto the first summand is $\bar\nabla$-parallel. This follows from a general result on the curvature of connections
with parallel skew-symmetric torsion (see \cite[Prop. 3.13]{CMS}). Thus we only need to consider the covariant derivative of the 
purely horizontal part.

From the first step in the proof of Lemma \ref{hor-curv} we obtain the formula
\be\label{curva}
\bar R(\tilde X, \tilde Y, \tilde Z, \tilde V) \,=\, R^M(X,Y,Z,V)\circ \pi \,+\,   4 g( \tau_{\tilde Z} \tilde V, \tau_{\tilde X} \tilde Y) \ .
\ee
Taking the derivative into the direction of a further horizontal vector $\tilde U$ and recalling that the torsion $\tau$ is $\bar\nabla$-parallel 
together with Lemma \ref{torsion}, (3), immediately gives
$$
(\bar\nabla_{\tilde U}\bar R)(\tilde X, \tilde Y, \tilde Z, \tilde V) \,=\, (\nabla_U R^M)(X,Y,Z,V)\circ \pi \ .
$$

This already proves one direction of the statement, i.e. that  $\bar\nabla \bar R = 0$ implies $\nabla^M R^M=0$.

For the other direction we still have to consider the derivative into vertical directions. But then the derivative of the first summand on the right side of \eqref{curva}
vanishes since it is constant along the fibres and the derivative of the second summand vanishes again since the torsion is parallel.

Combining the two statements we see that  $\kappa_\H \ge 0$ implies $\bar \nabla \bar R = 0$, whence $\nabla^M R^M =0$. Thus the quaternion K\"ahler 
manifold $M$ is symmetric, i.e. it  is isometric to a Wolf space.
\qed

Note that using \eqref{wirtinger}, one can restate Theorem \ref{theorem2} by saying that a compact positive quaternion-K\"ahler manifold whose sectional curvature $\kappa$ satisfies for every 2-plane $P$ the inequality
$$\kappa(P)\ge\frac{\s\cos^2(\theta_P)}{8n(n+2)}$$
is a Wolf space. Hence, our main result gives further evidence for the following 

 {\bf Conjecture}: Any compact quaternion-K\"ahler manifold with non-negative sectional curvature is a Wolf space.
 
Recall that by a result of Berger \cite{Ber}, a quaternion-K\"ahler manifold of (strictly) positive sectional curvature is isometric to $\mathbb H P^n$ up to constant rescaling.

\subsection{Sectional curvature of Wolf spaces}
In this subsection we will prove the converse of Theorem \ref{theorem2}, which also further motivates the concept of quaternionic sectional curvature.

\begin{ecor}\label{cor-sec}
The quaternionic sectional curvature of compact Wolf spaces is non-negative. In particular, the sectional curvature on quaternionic planes
is bounded from below by $ \frac{\s}{8n(n+2)}$.
\end{ecor}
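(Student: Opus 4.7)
My plan is to compute the quaternionic sectional curvature $\kappa_\H$ on a Wolf space $M=G/H$ directly from the symmetric space structure, and to express it as a manifestly non-negative quantity. Since $\{IX, JX, KX\}$ is an orthogonal basis of $Q(X)$ with common norm $|X|_g$, one has $|X|_g^2\,|\pr_{Q(X)}(Y)|_g^2 = \sum_{\a=1}^3 g^M(J_\a X, Y)^2$, so definition \eqref{def} reads
\begin{equation*}
\kappa_\H(X, Y)\,|X\wedge Y|_g^2 \,=\, R^M(X, Y, Y, X) \,-\, \frac{\s}{8n(n+2)} \sum_{\a=1}^3 g^M(J_\a X, Y)^2.
\end{equation*}

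Next I would identify the subtracted term with the $\sp(1)$-sectional curvature under the quaternion-K\"ahler holonomy decomposition $R^M = R^{\sp(1)} + R^{\sp(n)}$. Solving the commutator relation in Lemma \ref{curv} together with its cyclic versions — using $[I, J] = 2K$ and cyclic permutations — yields
\begin{equation*}
R^{\sp(1)}_{X, Y} \,=\, -\frac{\s}{8n(n+2)} \sum_{\a=1}^3 g^M(J_\a X, Y)\, J_\a,
\end{equation*}
and a short calculation using $g^M(J_\a Y, X) = -g^M(J_\a X, Y)$ gives $R^{\sp(1)}(X, Y, Y, X) = \frac{\s}{8n(n+2)} \sum_\a g^M(J_\a X, Y)^2$. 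Consequently $\kappa_\H(X, Y)\,|X\wedge Y|_g^2 = R^{\sp(n)}(X, Y, Y, X)$, reducing the corollary to showing that the $\sp(n)$-sectional curvature is non-negative on Wolf spaces.

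For the symmetric space $M = G/H$ with $H = \Sp(1) \cdot L$ and reductive decomposition $\gg = \hh \oplus \mm$, $\hh = \sp(1) \oplus \ll$, the basepoint curvature is $R^M_{X, Y} = -\ad([X, Y])|_\mm$ with $[X, Y] \in \hh$. Since $\sp(1)$ acts on $\mm$ via $\{J_\a\}$ and $\ll$ embeds into $\sp(n)$, the orthogonal splitting $[X, Y] = [X, Y]_{\sp(1)} + [X, Y]_\ll$ matches the holonomy splitting: $R^{\sp(1)}_{X, Y} = -\ad([X, Y]_{\sp(1)})|_\mm$ and $R^{\sp(n)}_{X, Y} = -\ad([X, Y]_\ll)|_\mm$. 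Using the $\Ad$-invariance of the underlying Killing-type metric — which gives the identity $\la [A, B], C\ra = \la A, [B, C]\ra$ — together with the orthogonality $\sp(1) \perp \ll$,
\begin{equation*}
R^{\sp(n)}(X, Y, Y, X) \,=\, -\la [[X, Y]_\ll, Y], X\ra \,=\, \la [X, Y]_\ll, [X, Y]\ra \,=\, |[X, Y]_\ll|^2 \,\geq\, 0.
\end{equation*}
This proves $\kappa_\H \geq 0$; and on a quaternionic plane $P$ the Wirtinger angle vanishes, so \eqref{wirtinger} combined with $\kappa_\H(P) \geq 0$ yields the lower bound $\kappa(P) \geq \frac{\s}{8n(n+2)}$.

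The key step is the second one: identifying the $\sp(1)$-correction in $\kappa_\H$ with the full $\sp(1)$-sectional curvature, using the explicit form of the $\sp(1)$-part of $R^M$ determined by Lemma \ref{curv}. Once this is in place, $\kappa_\H$ is revealed to be precisely the $\sp(n)$-sectional curvature, whose non-negativity on Wolf spaces is a direct consequence of the symmetric space curvature formula. The main source of potential error lies in the sign conventions for the $\sp(1)$ brackets and in verifying that the three cyclic versions of Lemma \ref{curv} are mutually consistent, but these are routine to settle.
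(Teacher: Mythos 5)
Your argument is correct, but it takes a genuinely different route from the paper. The paper does not work on $M$ at all: it passes to the twistor space $\mathcal Z M$, which for a Wolf space is a naturally reductive $3$-symmetric space $G/K$ whose canonical connection coincides with the homogeneous one, computes $g(\bar R(X,Y)Y,X)=|[X,Y]_{\k}|^2\ge 0$ there, and then invokes the equivalence (established inside the proof of Theorem \ref{theorem2} via Proposition \ref{sec-twist}) between non-negativity of the sectional curvature of $\bar R$ upstairs and non-negativity of $\kappa_\H$ downstairs. You instead stay on the base: you split $R^M=R^{\sp(1)}+R^{\sp(n)}$ according to the holonomy algebra, use Lemma \ref{curv} and its cyclic versions to pin down $R^{\sp(1)}_{X,Y}=-\tfrac{\s}{8n(n+2)}\sum_\a g^M(J_\a X,Y)J_\a$ (the coefficient and sign check out against the paper's conventions, and all three commutators are needed since $[\,\cdot\,,I]$ only determines the $\sp(1)$-component modulo $\RM I$), and observe that the subtracted term in \eqref{def} is exactly $R^{\sp(1)}(X,Y,Y,X)$, so that $\kappa_\H(X,Y)\,|X\wedge Y|_g^2=R^{\sp(n)}(X,Y,Y,X)=|[X,Y]_{\ll}|^2\ge 0$. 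The two small points you leave implicit are both fine: the metric on $\mm$ is a multiple of $-B$ since Wolf spaces are irreducible symmetric, and $\sp(1)\perp\ll$ follows from $\sp(1)=[\sp(1),\sp(1)]$ together with $[\sp(1),\ll]=0$ and the associativity of $B$. What your approach buys is a cleaner and more self-contained statement --- an exact identity exhibiting $\kappa_\H$ as the $\sp(n)$-sectional curvature, with no twistor machinery --- at the price of using the defining structure of Wolf spaces as $G/(\Sp(1)\cdot L)$ with the $\sp(1)$-factor acting quaternionically; the paper's route is longer but reuses verbatim the twistor-space equivalence that is the engine of its main theorem.
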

\proof
The twistor space $\mathcal Z  M$ of a compact Wolf space  $M$ is a simply connected homogeneous strict nearly K\"ahler manifold and thus
it is a compact naturally reductive $3$-symmetric space $\mathcal Z  M = G/K$ equipped with its canonical complex structure and the adapted
reductive decomposition $\gg = \k \oplus \mm$ (see \cite{Bu}, \cite{N2}). It is known that the canonical connection of the almost Hermitian 
structure of a Riemannian 3-symmetric space coincides with its 
homogeneous connection (see \cite[Prop. 4.1]{GC}). Moreover, the metric $g$ on  $\mathcal Z  M$ is then defined by  the restriction to
$\mm$ of a bi-invariant product on $\gg$ (see \cite[p. 360]{G1} or \cite[Sec. 6]{GC}). Hence we can compute the sectional curvature of the canonical connection  $\bar \nabla$ 
on the twistor space of a Wolf space by using \cite[Thm. 2.6, Ch. X]{KN2}.  At the base point $o$
and for tangent vectors $X, Y \in \mm \cong \T_o G/K$, we find
$$
g(\bar R(X,Y)Y, X) = - g([[X,Y]_\k, Y], X) = g([X,Y]_\k, [X, Y]_\k)  \ge 0 \ .
$$
It follows that the sectional curvature of $\bar R$ is non-negative. Hence, as showed in the proof of Theorem \ref{theorem2}, the quaternionic 
sectional curvature  $\kappa_\H$ is non-negative and the estimate is a consequence of the definition of $\kappa_\H$ in \eqref{def}.
\qed

%

\medskip

Using the Wirtinger angle $\theta_P$ introduced in  \eqref{wirtinger} we can formulate a more precise estimate for the sectional curvature 
on Wolf spaces as follows.

\begin{epr}\label{estimates}
Let $M$ be a Wolf space different from $\mathbb H P^m$ and let $P\subset \T M$ be any $2$-plane. Then the sectional curvature
$\kappa(P)$ satisfies the estimates
$$
   \frac{\s}{8n(n+2)}\cos^2(\theta_P)      \;  \le \;  \kappa(P) \;  \le \;      \frac{\s}{2n(n+2)} \ .
$$
In the case of the quaternionic projective space $\mathbb H P^m$ the sectional curvature $\kappa(P)$  satisfies
$$
\frac{\s}{16n(n+2)}     \;  \le \;  \kappa(P) \;  \le \;  \frac{\s}{4n(n+2)} \ .
$$
\end{epr}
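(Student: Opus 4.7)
The plan is to split the argument into three parts: the lower bound for Wolf spaces other than $\mathbb{H}P^m$ drops out essentially for free from earlier results, the entire case $M = \mathbb{H}P^m$ is handled by the explicit curvature formula of a quaternionic space form, and the remaining upper bound for non-projective Wolf spaces is reduced, via Proposition \ref{sec-twist}, to a Lie-theoretic estimate on the twistor space. For the lower bound when $M \neq \mathbb{H}P^m$, I would simply combine Corollary \ref{cor-sec} (which gives $\kappa_\H \geq 0$) with the reformulation \eqref{wirtinger} of the definition of $\kappa_\H$ to conclude $\kappa(P) \geq \frac{\s}{8n(n+2)}\cos^2(\theta_P)$.

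For $M = \mathbb{H}P^m$, my plan is to invoke the explicit curvature tensor of the quaternionic space form,
$$R(X, Y)Z \, = \, \tfrac{\s}{16n(n+2)}\Bigl[g(Y,Z)X - g(X,Z)Y + \sum_{\alpha = 1}^{3}\bigl(g(J_\alpha Y, Z)J_\alpha X - g(J_\alpha X, Z)J_\alpha Y + 2 g(X, J_\alpha Y) J_\alpha Z\bigr)\Bigr],$$
whose overall constant is fixed by the Einstein condition $\mathrm{Ric}^M = \frac{\s}{4n}g^M$. A direct evaluation on an orthonormal pair $X, Y$ spanning $P$ should yield $\kappa(P) = \frac{\s}{16n(n+2)}(1 + 3\cos^2(\theta_P))$, which at once delivers both the lower bound $\frac{\s}{16n(n+2)}$ (attained on totally real planes) and the upper bound $\frac{\s}{4n(n+2)}$ (attained on quaternionic planes).

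For the upper bound on $M \neq \mathbb{H}P^m$, my plan is to transfer the problem to the twistor space. Given orthonormal $X_1, X_2 \in \T_pM$ spanning $P$, Proposition \ref{sec-twist} with $V_1 = V_2 = 0$ gives
$$\kappa(P) \, = \, \bar\kappa(\tilde{P}_I) \, + \, \tfrac{\s}{8n(n+2)}\bigl(\cos^2(\theta_P) - g^M(IX_2, X_1)^2\bigr)$$
for every $I \in \mathcal Z M_p$, where $\tilde{P}_I$ denotes the horizontal lift of $P$ at $I$. A Cauchy--Schwarz argument applied to the parametrization $I = aI_0 + bJ_0 + cK_0$ of the twistor sphere shows that $\max_I g^M(IX_2, X_1)^2 = \sum_\alpha g^M(J_\alpha X_2, X_1)^2 = \cos^2(\theta_P)$, so evaluating at an $I$ achieving this maximum reduces the problem to $\kappa(P) = \bar\kappa(\tilde{P}_I) \leq \frac{\s}{2n(n+2)}$. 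The naturally reductive $3$-symmetric description $\mathcal Z M = G/K$ with decomposition $\gg = \k \oplus \mm$ from the proof of Corollary \ref{cor-sec} further yields $\bar\kappa(X, Y) = |[X, Y]_\k|^2$ for orthonormal $X, Y \in \mm$, so the question finally comes down to the Lie-theoretic estimate $|[X, Y]_\k|^2 \leq \frac{\s}{2n(n+2)}$ on orthonormal horizontal pairs.

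The hard part will be this final bound on $|[X, Y]_\k|^2$. I would establish it by a case analysis over the simple Lie groups $G$ underlying the Wolf spaces, using the root-space decomposition of $\gg$ and carefully tracking the normalization of the invariant bilinear form which produces the prescribed scalar curvature $\s$ on $M$. The factor-of-two gap between the sharp $\mathbb{H}P^m$ bound $\frac{\s}{4n(n+2)}$ and the general bound $\frac{\s}{2n(n+2)}$ should ultimately reflect the fact that only for the root system $C_{m+1}$ underlying $\mathbb{H}P^m$ is the isotropy of $K$ on $\mm$ quaternionically of rank one, constraining the $\k$-component of the brackets more tightly than in the remaining types.
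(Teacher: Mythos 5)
Your lower bound for $M\neq\mathbb{H}P^m$ (Corollary \ref{cor-sec} plus \eqref{wirtinger}) and your treatment of $\mathbb{H}P^m$ via the explicit space-form curvature tensor are both correct and coincide with what the paper does. The reduction you set up for the remaining upper bound is also formally correct: with $V_1=V_2=0$ Proposition \ref{sec-twist} does give $\kappa(P)=\bar\kappa(\tilde P_I)+\frac{\s}{8n(n+2)}\bigl(\cos^2\theta_P-g^M(IX_2,X_1)^2\bigr)$, and Cauchy--Schwarz on $I=aI_0+bJ_0+cK_0$ does produce an $I$ at which the correction vanishes.

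The genuine gap is that this reduction leaves the entire content of the upper bound unproved. Since the correction term is always non-negative, $\bar\kappa(\tilde P_I)\le\kappa(P)$ for every $I$ with equality at your maximizing $I$; so the statement ``$|[X,Y]_\k|^2\le\frac{\s}{2n(n+2)}$ for orthonormal horizontal $X,Y$'' is exactly equivalent to the bound $\kappa(P)\le\frac{\s}{2n(n+2)}$ you are trying to prove, and the transfer to the twistor space makes no progress. You then only announce ``a case analysis over the simple Lie groups,'' without carrying it out, and your heuristic about the $C_{m+1}$ root system explaining the factor-of-two discrepancy is not an argument; note in particular that your proposed uniform bound $\frac{\s}{2n(n+2)}$ must hold with that exact constant for every exceptional and classical Wolf space other than $\mathbb{H}P^m$, and nothing you write explains why the answer is the same across all these types. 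The paper supplies precisely this missing content by quoting three external results: Chi's theorem that the maximum of the sectional curvature of a positive quaternion-K\"ahler manifold is attained on quaternionic planes, Helgason's theorem identifying the maximal sectional curvature of a compact irreducible symmetric space with the squared length of the highest restricted root, and the computation of the length of the Wolf root in \cite{SW} (which coincides with the highest restricted root exactly when $M\neq\mathbb{H}P^n$), yielding the value $\frac{\s}{2n(n+2)}$. Without either these citations or an actual root-by-root computation, your proof of the upper bound is incomplete.
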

\proof
Q.-S. Chi proved  in \cite[Thm. 1]{C} that the maximum 
of the sectional curvature on a positive quaternion-K\"ahler manifold is obtained on quaternionic planes. Moreover,  S. Helgason proved
in  \cite[Thm. 1.1]{H} that on each compact irreducible Riemannian symmetric space the maximum of the sectional
curvature is given by the length of the highest restricted root. This root coincides with the so-called Wolf root on all Wolf spaces except the
quaternion projective space. The length of the Wolf root was computed  in \cite[(4.1)]{SW}.  It follows that the maximum of the sectional 
curvature on compact Wolf spaces different from $\mathbb H P^n$ is $\frac{\s}{2n(n+2)}$. The length of all highest restricted roots can also
be found in \cite[Table 1, p. 175]{O}.

For the quaternionic projective space  $\mathbb H  P^n$ the estimates immediately follow from a well-known explicit curvature
formula for  $\mathbb H  P^n$. Indeed, for the  quaternionic projective space  $\mathbb H  P^n$  the sectional curvature of a 
plane $P$  spanned by an orthonormal basis $\{X,Y\}$ is given by 
$$
\kappa(P) \;=\; \kappa(X,Y) \;=\; \frac{\s}{16n(n+2)} \Big(1 + 3  \cos^2(\theta_P) \Big) \ .
$$
Thus the minimal value of the sectional curvature $\kappa(X, Y)$ is attained on totally real planes, whereas the maximal value is realised 
on quaternionic planes.
\qed


\bigskip

\labelsep .5cm

\end{document}